\documentclass[a4paper]{amsart}
\usepackage[T1]{fontenc}
\usepackage{enumerate}
\usepackage{amsthm}
\usepackage{amscd}
\usepackage{graphicx}
\usepackage{lmodern}
 \usepackage{pdfsync}
\def\car{{\mathbf 1}}

%
\def\/{\, | \,}
\def\e{\varepsilon}
\def\ee{\e}
\def\C{C}
\def\D{{\mathbb D}}
\def\R{{\mathbf R}}
\def\L{{\mathbf L}}
\def\P{{\mathbf P}}
\def\F{{\mathcal F}}
\def\Proba0{{\mathcal P}_0}
\newcommand{\la}{\langle\,}
\def\ra{\,\rangle}
\def\<<{\langle\!\langle\,}
\def\>>{\,\rangle\!\rangle}
\def\d{\, \text{d}}
\newcommand{\esp}[1]{{\mathbf E}\left[{#1}\right]}

\newcommand{\N}{{\mathbf N}}

\newtheorem{theorem}{Theorem}[section]

\newtheorem{lemma}{Lemma}[section]
\newtheorem{defn}{Definition}



\begin{document}
\title{A {M}arkov model for the spread of Hepatitis C virus}
\author{L. Coutin}
\address{Institut de Mathématiques de Toulouse, LSP\\
  Toulouse, France} \email{coutin@cict.fr}
\author{L. Decreusefond}
\address{Institut Telecom, Telecom ParisTech, CNRS LTCI\\
  Paris, France} \email{Laurent.Decreusefond@telecom-paristech.fr}
\thanks{This work was carried out during a stay of Laurent
  Decreusefond at Universit\'e Paris Descartes. He would like to thank
  Universit\'e Paris Descartes for warm hospitality.}
\author{J.S. Dhersin}
\address{Department of Mathematics\\
  Institut Galil\'ee\\
  Paris, France}
\email{dhersin@math.univ-paris13.fr}
\begin{abstract}
  We propose a Markov model for the spread of Hepatitis C virus (HCV)
  among drug users who use injections. We then proceed to an
  asymptotic analysis (large initial population) and show that the
  Markov process is close to the solution of a non linear autonomous
  differential system. We prove both a law of large numbers and
  functional central limit theorem to precise the speed of convergence
  towards the limiting system. The deterministic system itself
  converges, as time goes to infinity, to an equilibrium point. This
  corroborates the empirical observations about the prevalence of HCV.
\end{abstract}
\keywords{Epidemiology, HCV, Markov processes, mean field
  approximation} \subjclass{60F17,60J70,92D30}
\maketitle{}
\section{Motivations}
\label{sec:motivations}
Hepatitis C virus  (HCV) infects 170 million people in the world (3~\%
of the population) and 9 million in Europe (1~\% of the population)
\cite{who}. More than 75~\% of newly infected patients progress to
develop chronic infection. Then, Cirrhosis develops in about 10~\% to
20~\%, and liver cancer develops in 1~\% to 5~\% over a period of 20
to 30 years. These long-term consequences, which suggest an increased
mortality due to HCV infection, make the prevention of spread of
hepatitis C a major public health concern.

HCV is spread primarily by direct contact with human blood. In
developed countries that have safe blood supplies, the population
infected by HCV is closely related to injecting drug users (IDU).  It
is estimated that 90~\% of infectious are due to IDU
\cite{2004_HPA}. In order to reduce the numbers of new hepatitis C cases, preventing
infections in IDU is then a priority. Programs exist all over the
world which try to reduce the prevalence of many infectious diseases
like HIV or hepatitis C, among injecting drug users. They are mainly
 based on needle exchanges. It turns out that after several years of such
 programs, the HIV prevalence seems to be now rather low whereas the
 percentage of IDU who are HCV positive remains about 60~\% \cite{2006_Jauffret-Roustide_Substance_Use_and_Misuse,2004_HPA}. We
 were asked by epidemiologists to provide them a mathematical model
 which could quantitatively evaluate  the differences between the two
 diseases.

It is always a challenge to analyze an epidemic problem because there
are so many real-life situations that should be incorporated while keeping the
mathematical  model tractable. Moreover, epidemic field  studies are expensive and  hard to organize so that parameter estimates are rare and often
imprecise. It is thus necessary to deal with parsimonious models whose
parameters have clear and visible meaning.
To the best of our knowledge, the only models which have been developed
for the dynamics of HCV transmission are found in the references \cite{2007_Vickerman_IJE,esposito2004nem}. It is a deterministic model with more than
twenty-five parameters, for which the authors do not have explicit results for the asymptotics and
only estimate them by simulations.  In our paper, we propose a
parsimonious Markovian model for the spread of HCV in a local population of
IDU. It should be noted that our model bears some resemblance to a random SIR
(Susceptible-Infected-Recovered) model but differs from  it by some essential
characteristics. Our Susceptible (respectively Infected) are IDU who
are sero-negative (respectively sero-positive). There is no Recovered
category in our model since we can't measure their number (when they
are no longer IDU, they can't be counted in studies focused on drugs
users). Moreover, our population is not closed (there are new
susceptible all the time) and a new drug user may be infected at his
first injection. This means that there is an exogeneous flow to the
Infected category, a feature which is not included in usual SIR models.

To keep the Markovian character of our model, we made the following
usual and reasonable hypothesis.  Exogenous antibody-positive and antibody-negative individuals
arrive in this local population according to Poisson processes.  If
initiated by an antibody-positive drug addict, a new IDU acquires the virus
very rapidly after the initiation
\cite{1990_Bell_MedJAust,1996_Garfein_AmJPublicHealth}. HCV then
spreads in the population by sharing syringe, needles and other
accessories (cotton, boilers, etc.). Each individual of the population
stays in his state (infected/non infected)  for an exponentially distributed
time. We present the model in Section 3. If we denote by $X_1(t)$
(resp. $X_2(t)$) the number of antibody-positive
(resp. antibody-negative) individuals in the local population at time
$t$, we prove that the process $X=(X_1,X_2)$ is an ergodic Markov
process. In Section 4., we give a related deterministic differential
system connected with this Markov process. We study its asymptotic
behaviour and give an explicit expression of the limit of the
solution. In Section 5., we give a mean-field approximation of the
process $X$: For large populations, we prove that the process $X$ is
close to the solution ${\psi}$ of the deterministic differential
system. In Section 6., we prove that, for large populations, the
invariant distribution for the Markov process $X$ can be approximated
by the Dirac measure which only charges ${\psi}(\infty )$. Hence we
can give an explicit limit of the prevalence of HCV in the
population. In Section \ref{sec:centr-limit-theor}, we give a central limit theorem
for the approximation of $X$ by ${\psi}$ when the population tends to
infinity. In Section \ref{sec:numer-invest}, we show that even for a small value of $N$, there is a good accordance between the prevalence
computed on the deterministic limit and the prevalence observed on the
stochastic model. We also show that this can be extended to the
sensitivity of the model with respect to slight variations of some parameters.
\section{Preliminaries}
\label{sec_preliminaries}
Let us denote by $\D([0,T],\R^2)$ the set of cadlag functions 
equipped with its usual topology. In this Section, we recall some
results about cadlag semi-martingales; for details we refer to
\cite{MR982268}. We assume that we are given $(\Omega,\, (\F_t,\, t\ge
0),\, \P)$ a filtered probability space satisfying the so-called usual
hypothesis. On $(\Omega,\, (\F_t,\, t\ge 0),\, \P)$, let $X$ and $Y$
be two real-valued cadlag square integrable semi-martingales. The
mutual variation of $X$ and $Y$, denoted by $[X,\, Y]$, is the right
continuous process with finite variation such that the following
integration by parts formula is satisfied:
\begin{equation*}
  X(t)Y(t)-X(0)Y(0)=\int_{(0,\, t]} X(s_-)\d Y(s)+\int_{(0,\, t]} Y(s_-)\d X(s)+[X,\, Y]_t.
\end{equation*}
The Meyer process of the couple $(X,\, Y)$, or its square bracket, is
denoted by $\la X,\, Y \ra$  is the unique right continuous with
finite variation predictable process such that
\begin{equation*}
  X(t)Y(t)-X(0)Y(0)-\la X,\, Y\ra_t
\end{equation*}
is a martingale. Alternatively, $\la X,\, Y\ra$ and is the unique
right continuous, predictable with finite variation, process such that
$[X,\, Y]-\la X,\, Y\ra$ is a martingale.  For a vector valued
semi-martingale $X=(X_1,\, X_2)$ where $X_1$ and $X_2$ are real valued
martingales, we denote by $\<< X \>>$, its square bracket, defined by
\begin{equation*}
  \<< X \>>_t=
  \begin{pmatrix}
    \la X_1\ra_t & \la X_1,\, X_2\ra_t\\
    \la X_1,\, X_2\ra_t & \la X_2\ra_t
  \end{pmatrix}.
\end{equation*}
In the sequel, if $x$ is a vector (resp. $M$ a matrix) we denote by
$\|x\|$ (resp. $\|M\|$) its $\L^1$-norm.

Let $E$ be a discrete denumerable space. Let $(X(t),\, t\ge 0)$ be an
$E$-valued, pure jump Markov process, with infinitesimal generator
$Q=(q_{xy},\, (x,\, y)\in E\times E)$. For any
$F\,:\,E\to\R$,  Dynkin's Lemma states that the process:
\begin{equation*}
  F(X(t))-F(X(0))-\int_0^t QF(X_s)\d s
\end{equation*}
is a local martingale, where
\begin{equation*}
  QF(x)=\sum_{y\neq x} (F(y)-F(x))q_{xy}.
\end{equation*}
Here and hereafter, we identify the matrix $Q$ and the operator $Q$
defined as above.
\section{Markov model}
\label{sec:markov-model}
We consider the dynamics of HCV among a local population which suffers
a continuous arrival of exogenous antibody-positive individuals,
described by a Poisson process of intensity $r$. We let $X_1(t)$ and
$X_2(t)$ denote the number of antibody-positive, respectively
antibody-negative, users at time $t$ in the population under
consideration. The new  susceptible drug users arrive as a Poisson process
of intensity $\lambda$. We assume that for their first injection, they
are initiated by an older IDU who has a probability
$q(t)=X_1(t)(X_1(t)+X_2(t))^{-1}$ of being infected. For different
reasons, even in this situation, the probability of being infected,
 is not exactly one and is denoted by $p_I$. Each
time, an antibody-negative IDU has an injection, he may share some of
his paraphernalia and may become infected if the sharing occurs with
an infected IDU. We summarize all these probabilities by saying that
at each injection, the probability of becoming infected is
\begin{math}
  p q(t),
\end{math}
where $p$ is a parameter to be estimated, as is $p_I$. If we denote by
${\alpha}$ the rate at which an IDU injects, and if ${\alpha}p$ is
small, we can assume that the rate at which a sane IDU in the
population is infected, is given by ${\alpha}pq(t)$. Once infected, an IDU may
exit from the population under consideration either by a death, self
healing or stopping drug usage. The whole of these situations is
modeled by an exponentially distributed duration with parameter
$\mu_1$. For antibody-negative IDU, the only way to exit the
population is by stopping drug injection, supposed to happen after an
exponentially distributed duration with parameter $\mu_2$. In summary,
the transitions are described in Figure~\ref{fig:transitions}.

\begin{figure}[!ht]
 \begin{center}
   \leavevmode
    \includegraphics*[width=0.8\textwidth]{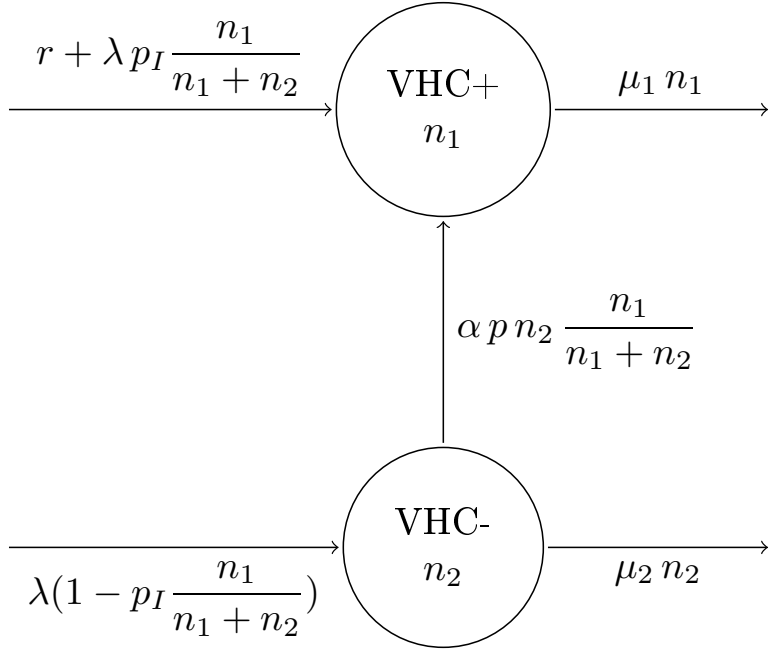}
   \caption{Transitions of the Markov model.}
    \label{fig:transitions}
  \end{center}
\end{figure}

For further references, we set
\begin{align*}
  q_1(n_1,\, n_2)&=r + \lambda\,p_I\dfrac{n_1}{n_1+n_2}\\
  q_2(n_1,\, n_2)&=\mu_1\, n_1\\
  q_3(n_1,\, n_2)&=\alpha\, p \,n_2\,\dfrac{n_1}{n_1+n_2}\\
  q_4(n_1,\, n_2)&=\lambda(1-p_I\dfrac{n_1}{n_1+n_2})\\
  q_5(n_1,\, n_2)&=\mu_2\, n_2 .
\end{align*}
\begin{lemma}
  \label{lem:borne_sup} Let $x^0=(x_1^0,x_2^0)$. Conditionally on
  $X(0)=x^0$, the process $W(t)=X_1(t)+X_2(t)-(x_1+x_2)$ is dominated
  (for the strong stochastic order of processes) by a Poisson process
  of intensity $r+\lambda$.  In particular, for any $t\in [0,T]$,
  \begin{equation*}
    \esp{\sup_{t\le T}\|X(t)\|^p\, \Big| \, X(0)=x^0}\le (\|x^0\|+ (r+\lambda)T)^p,
  \end{equation*}
for any $p\ge 1$.
\end{lemma}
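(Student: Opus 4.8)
\emph{Proof proposal.} The plan is to control the total population $N(t):=X_1(t)+X_2(t)$, which equals $\|X(t)\|$ since $X_1,X_2\ge 0$ at all times, by exploiting the structure of the five transitions: only the two exogenous arrivals make $N$ increase, since $q_1$ performs $X_1\mapsto X_1+1$ and $q_4$ performs $X_2\mapsto X_2+1$, whereas $q_3$ (infection) does $(X_1,X_2)\mapsto(X_1+1,X_2-1)$ and hence leaves $N$ unchanged, while $q_2$ and $q_5$ decrease $N$ by one. The decisive point is that the combined rate of the two ``up'' transitions does not depend on the state:
\begin{equation*}
  q_1(n_1,n_2)+q_4(n_1,n_2)=r+\lambda p_I\frac{n_1}{n_1+n_2}+\lambda\Bigl(1-p_I\frac{n_1}{n_1+n_2}\Bigr)=r+\lambda ,
\end{equation*}
and the same holds, with the natural convention, at the empty state $(0,0)$, where $q_2=q_3=q_5=0$ and $q_1+q_4=r+\lambda$. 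Since all rates are bounded by an affine function of $N$, $X$ is a well-defined non-explosive pure-jump Markov process, so there is no issue in working with its trajectories.

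Next I would introduce the counting process $\tilde N$ registering exactly the up-jumps of $N$, namely $\tilde N(t)=\sum_{0<s\le t}\car_{\{N(s)>N(s^-)\}}$. By the computation above, the up-jumps of $N$ occur with the constant --- hence deterministic and predictable --- stochastic intensity $r+\lambda$, so that $\tilde N(t)-(r+\lambda)t$ is a local martingale; by Watanabe's characterization of the Poisson process, $\tilde N$ is a homogeneous Poisson process of intensity $r+\lambda$. Writing $\tilde D$ for the analogous count of down-jumps, which is nonnegative and nondecreasing, we have
\begin{equation*}
  W(t)=N(t)-N(0)=\tilde N(t)-\tilde D(t)\le \tilde N(t)\qquad\text{for every }t\ge 0,\ \P\text{-a.s.}
\end{equation*}
As this domination is realized pathwise, on the original probability space, between $W$ and the Poisson process $\tilde N$, it is exactly the statement that $W$ is dominated in the strong stochastic order of processes by a Poisson process of intensity $r+\lambda$, which is the first assertion.

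For the moment estimate, monotonicity of $\tilde N$ gives $\sup_{t\le T}\|X(t)\|=\sup_{t\le T}N(t)\le N(0)+\sup_{t\le T}\tilde N(t)=\|x^0\|+\tilde N(T)$, with $\tilde N(T)$ Poisson of mean $(r+\lambda)T$, and the announced $\L^p$ estimate follows upon raising to the power $p$ and taking the conditional expectation (the case $p=1$ being an equality, and the finiteness of all moments of the Poisson law being what is used for general $p$). The proof is short; the only steps deserving real attention are the state-independence identity $q_1+q_4\equiv r+\lambda$ together with the boundary convention at $(0,0)$, and the appeal to Watanabe's theorem to pass from a constant compensator to the Poisson distribution.
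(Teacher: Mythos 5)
Your proof is correct and follows essentially the same route as the paper: both rest on the observation that the only transitions increasing $X_1+X_2$ are the two exogenous arrivals, whose combined rate $q_1+q_4\equiv r+\lambda$ is state-independent, so that $W$ is pathwise dominated by the arrival counting process, a Poisson process of intensity $r+\lambda$; your appeal to Watanabe's characterization merely formalizes what the paper phrases as ``suppressing all the departures.'' (Both your argument and the paper's share the same slight imprecision for $p>1$: if $\Pi$ denotes the dominating Poisson process, the domination only yields $\esp{\sup_{t\le T}\|X(t)\|^p\,|\,X(0)=x^0}\le \esp{(\|x^0\|+\Pi(T))^p}$, which by Jensen's inequality is strictly larger than the stated bound $(\|x^0\|+(r+\lambda)T)^p$ --- but that is an issue with the lemma as written, not a defect specific to your proposal.)
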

\begin{proof}
  It suffices to say that by suppressing all the departures, we get
  another system with a population larger than that of the system
  under consideration, at any time, for any trajectory. Then,
  $X^N_1(t)+X^N_2(t)-(x_1+x_2)$ is less than the number of arrivals of
  a Poisson process of intensity $r+\lambda$. Since a Poisson process
  has increasing path,  its supremum over $[0,\, T]$
  is  its value at time $T$. The second assertion
  follows.
\end{proof}
\begin{theorem}\label{thm:ergodic}
  The Markov process $X=(X_1,\, X_2)$ is ergodic. For $r>0$, the
  process $X$ is irreducible. For $r=0$, the set $\{(n_1,\, n_2)\in
  \N\times \N,\ n_1=0\}$ is a proper closed subset.
\end{theorem}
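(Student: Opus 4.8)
The plan is to prove ergodicity by the classical Foster--Lyapunov route --- a linear drift estimate giving positive recurrence, together with irreducibility --- while treating the boundary case $r=0$, where the chain is reducible, separately. First note that the chain does not explode: by \lemref{lem:borne_sup} the total size $X_1(t)+X_2(t)$ is almost surely finite for every $t$, and in any case the total outgoing rate $q_1+\cdots+q_5$ from a state $(n_1,n_2)$ is bounded by a constant times $1+n_1+n_2$, so $X$ is a well-defined conservative Markov process on $\N\times\N$ for all times.

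The heart of the proof is a drift estimate for the linear function $V(n_1,n_2)=n_1+n_2=\|(n_1,n_2)\|$. Reading off the five transitions, the jump $(n_1,n_2)\to(n_1+1,n_2-1)$ of rate $q_3$ leaves $V$ unchanged, the jumps of rates $q_1$ and $q_4$ increase $V$ by $1$, and those of rates $q_2$ and $q_5$ decrease $V$ by $1$. Since the terms $\lambda p_I\,n_1/(n_1+n_2)$ appearing in $q_1$ and in $q_4$ cancel, one obtains
\[
  QV(n_1,n_2)=q_1+q_4-q_2-q_5=(r+\lambda)-\mu_1 n_1-\mu_2 n_2 \le (r+\lambda)-c\,V(n_1,n_2),
\]
with $c=\min(\mu_1,\mu_2)>0$. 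Hence $QV\le -1$ outside the finite set $K=\{(n_1,n_2):n_1+n_2\le (r+\lambda+1)/c\}$, which is the Foster--Lyapunov criterion for positive recurrence of the chain on its recurrent class (and it also shows that the invariant law there has finite first moment).

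It then remains to identify the recurrent class. For $r>0$ I would check irreducibility by exhibiting, between any two states, a finite path of transitions of positive rate: one can always add an antibody-positive individual since $q_1\ge r>0$, always add an antibody-negative one since $q_4\ge \lambda(1-p_I)>0$ (using $p_I<1$), and remove an antibody-positive (resp.\ antibody-negative) individual whenever $n_1>0$ (resp.\ $n_2>0$) since $q_2=\mu_1 n_1$ (resp.\ $q_5=\mu_2 n_2$) is then positive; so from $(n_1,n_2)$ one descends by $q_2$- then $q_5$-steps to $(0,0)$ (with the natural convention that the infection proportion is $0$ on the empty population) and then climbs by $q_1$- then $q_4$-steps to any $(m_1,m_2)$. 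Together with the drift estimate this gives ergodicity. For $r=0$ one has $q_1(0,n_2)=q_2(0,n_2)=q_3(0,n_2)=0$, so on $\{n_1=0\}$ only the $q_4$- and $q_5$-transitions act and they preserve $n_1=0$; hence $\{(n_1,n_2):n_1=0\}$ is closed, and it is proper since e.g.\ $(1,0)$ is not in it. On that set $X_2$ is the birth--death chain with birth rate $\lambda$ and death rate $\mu_2 n_2$, positive recurrent with invariant law the Poisson distribution of parameter $\lambda/\mu_2$; and since the drift bound still reads $QV\le \lambda-cV$, the chain started at any state with $n_1>0$ reaches $\{n_1=0\}$ almost surely, so every such state is transient, $\{n_1=0\}$ is the unique recurrent class, and $X$ is again ergodic, with invariant law supported there.

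The routine parts are the explicit communication paths and the one-line computation of $QV$; the delicate point is the interpretation of ergodicity when $r=0$, where irreducibility fails and one has to isolate the absorbing set $\{n_1=0\}$, establish positive recurrence of the induced birth--death chain on it, and argue --- from the drift estimate --- that it is reached from every initial condition.
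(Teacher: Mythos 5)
Your proof is correct and follows essentially the same route as the paper: the same Foster--Lyapunov argument with the linear function $V(n_1,n_2)=n_1+n_2$, the same drift computation $QV=(r+\lambda)-\mu_1 n_1-\mu_2 n_2$, and the same conclusion outside a finite set $\{n_1+n_2\le K\}$ (the paper invokes Proposition~8.14 of \cite{MR1996883} and checks the required integrability via Lemma~\ref{lem:borne_sup}). Your explicit communication paths for $r>0$ and your analysis of the absorbing set $\{n_1=0\}$ for $r=0$ simply flesh out what the paper dismisses as ``immediate through inspection of the transition rates.''
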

\begin{proof}
  Let $S$ be the function defined on $\N\times \N$ by
  \begin{equation*}
    S(n_1,\ n_2)=\|(n_1,n_2)\|=n_1+n_2.
  \end{equation*}
  If we denote by $Q$ the infinitesimal generator of $X$, we have
  \begin{equation*}
    QS(n_1,\, n_2)=\lambda+r-\mu_1 n_1-\mu_2 n_2.
  \end{equation*}
  Let $K$ be a real strictly greater than $(\lambda+r+1)/\mu_-$ where
  $\mu_-=\mu_1\wedge \mu_2$ and consider the following finite subset
  of the state space:
  \begin{equation*}
    D_K=\{(n_1,\, n_2)\in \N\times \N,\ n_1+n_2\le K\}.
  \end{equation*}
  If $(n_1,\, n_2)$ belongs to $D_K^c$, then
  \begin{equation*}
    QS(n_1,\, n_2)\le \lambda+r-\mu_-( n_1+ n_2)<-1.
  \end{equation*}
  Lemma \ref{lem:borne_sup} implies that both
  \begin{equation*}
    \esp{\sup_{s\in[0,\, 1]} S(X(s))} \text{ and } \esp{\int_0^1 |QS(X(s))|\d s}
  \end{equation*}
  are finite.  Then according to \cite[Proposition 8.14]{MR1996883},
  $X$ is ergodic.

  The second and third assertions are immediate through inspection of
  the transition rates.
\end{proof}
With the non-linearity appearing in the transitions, it seems hopeless
to find an exact expression for the stationary probability of the
Markov process $(X_1,\, X_2)$. As usual in queueing theory
\cite{MR1996883}, we then resort to asymptotic analysis in order to
gain some insights on the evolution of this system. This means that we
let the initial population becoming larger and larger. For keeping
other quantities of the same order of magnitude, one are thus led to
increase $r$ and $\lambda$ at the same speed, i.e., keeping the ratio
$i=(r+\lambda)/(x_1+x_2)$ constant. Note that in epidemiological
language, $i$ is the incidence of new susceptible. It is measured in
percentage of individuals per unit of year.
\section{A deterministic differential system}
\label{sec:differential-system}
The mean field approximation will lead us to investigate the solutions of the
following differential system with initial condition
$x^0=(x^0_1,x^0_2)\in (\R_+\times \R_+)\backslash
\left\{(0,0)\right\}$:
\begin{equation}
  \label{eq:1}
  \tag{$S_r(x^0)$}
  \begin{cases}
    \psi_1^\prime(t)&= r + \lambda\,p_I\dfrac{\psi_1(t)}{\psi_1(t)+\psi_2(t)}-\mu_1\, \psi_1(t)+\alpha\, p     \,\dfrac{\psi_1(t)\psi_2(t)}{\psi_1(t)+\psi_2(t)},\\
    \psi_1(0)&=x_1^0,\\
    \psi_2^\prime(t)&=\lambda(1-p_I\dfrac{\psi_1(t)}{\psi_1(t)+\psi_2(t)})-\mu_2\, \psi_2(t)-\alpha\,     p\,\dfrac{\psi_1(t)\psi_2(t)}{\psi_1(t)+\psi_2(t)},\\
    \psi_2(0)&=x_2^0.
  \end{cases}
\end{equation}
\begin{theorem}
  \label{thm:asymptotique_syst_diff}
  For any $x^0=(x^0_1,x^0_2)\in (\R_+\times \R_+)\backslash
  \left\{(0,0)\right\}$, there exists a unique solution to
  (\ref{eq:1}). Furthermore, this solution is defined on $\R$. For
  $r>0$, the differential system has a unique fixed point $(\xi_1,\,
  \xi_2)$ in $\R_+\times \R_+,$ defined by the equations
  \begin{equation}
    \label{eq:2}
\xi_2=\frac{1}{\mu_2}(r+\lambda -\mu_1\xi_1)\text{ and } \xi_1=\frac{ab-c+\text{sgn}(a)\sqrt{(ab-c)^2+4abr\mu_1}}{2a\mu_1},
  \end{equation}
where $a=\alpha p -\mu_1+\mu_2,\ b=r+\lambda\text{ and } c=r\mu_1+\lambda(1-p_I)\mu_2.$
Moreover, for $r>0$ and any $x^0\in \R_+^2\backslash \{0,\, 0\}$,
\begin{equation*}
   \lim_{t\to+\infty}(\psi_1(t),\psi_2(t))=(\xi_1,\, \xi_2).
\end{equation*}
  If $r=0$ and $x^0_1=0$ then
  \begin{equation*}
    \psi_1(t)=0 \text{ for all } t \text{ and }  \lim_{t\to+\infty}(\psi_1(t),\psi_2(t))=(0,\lambda/ \mu_2).
  \end{equation*}
  If $r=0$ and $\rho=\alpha p +\mu_2p_I-\mu_1>0$, then there exists
  two equilibrium points: one is $(0,\,\lambda/\mu_2)$ and the other
  is the unique solution with positive first coordinate of
  (\ref{eq:2}).  If $x^0_1>0$ then
  \begin{equation*}
    \lim_{t\to+\infty}(\psi_1(t),\psi_2(t))=(\xi_1,\, \xi_2).
  \end{equation*}
  If $r=0$ and $\rho\le 0$, then for any $x^0$ with positive $x^0_1$,
  \begin{equation*}
    \lim_{t\to+\infty}(\psi_1(t),\psi_2(t))=(0,\lambda/ \mu_2).
  \end{equation*}
  For further references, we denote by $\psi^\infty$ the unique point
  to which the system converges in each case.  We denote by $\Psi$ the
  measurable function such that
  \begin{math}
    \Psi(x^0,\, t)
  \end{math}
  is the value of the solution of (\ref{eq:1}) at time $t$.
\end{theorem}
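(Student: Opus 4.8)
The plan is to handle in turn: (i) solvability and completeness of $(S_r(x^0))$, (ii) the description of the equilibria, (iii) the long-time behaviour.

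\textbf{Solvability.} The right-hand side $F=(F_1,F_2)$ of $(S_r(x^0))$ is $C^\infty$, hence locally Lipschitz, on the open half-plane $\{\psi_1+\psi_2>0\}$, so Cauchy--Lipschitz provides a unique maximal solution from every $x^0\ne(0,0)$. Summing the two equations cancels all nonlinear terms and all $p_I$-terms and leaves $S':=(\psi_1+\psi_2)'=r+\lambda-\mu_1\psi_1-\mu_2\psi_2$, whence $r+\lambda-\mu_+S\le S'\le r+\lambda-\mu_-S$ with $\mu_-=\mu_1\wedge\mu_2$ and $\mu_+=\mu_1\vee\mu_2$; thus $S$ stays in a fixed compact subinterval of $(0,\infty)$ (between $\min(S(0),(r+\lambda)/\mu_+)>0$ and $\max(S(0),(r+\lambda)/\mu_-)$), so the trajectory never explodes and never reaches the forbidden point $(0,0)$, and is therefore global; for negative times one also uses $F_1\to r$ near $(0,0)$, which for $r>0$ forbids $\psi_1$ from decreasing to $0$. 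Invariance of the closed quadrant is immediate from $F_1=r\ge0$ on $\{\psi_1=0\}$ and $F_2=\lambda\bigl(1-p_I\psi_1/(\psi_1+\psi_2)\bigr)\ge0$ on $\{\psi_2=0\}$; and when $r=0$, $F_1=\psi_1\,g(\psi_1,\psi_2)$ for a continuous $g$, so $\psi_1\equiv0$ if $x_1^0=0$ while $\psi_1>0$ for all $t$ if $x_1^0>0$.

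\textbf{Equilibria.} At a fixed point, summing $F_1=F_2=0$ gives $\mu_1\xi_1+\mu_2\xi_2=r+\lambda$, i.e.\ the first equation of (\ref{eq:2}). Substituting $\xi_2=(r+\lambda-\mu_1\xi_1)/\mu_2$ into $F_1=0$ and clearing the factor $\xi_1+\xi_2$ reduces that equation, after simplification, to the quadratic $a\mu_1\xi_1^2-(ab-c)\xi_1-br=0$ with $a,b,c$ as in the statement. For $r>0$ the left side equals $-br<0$ at $\xi_1=0$ and $(b/\mu_1)\,\lambda(1-p_I)\mu_2\ge0$ at $\xi_1=(r+\lambda)/\mu_1$, so by a sign inspection (using the sign of the leading coefficient $a\mu_1$) exactly one root lies in $\bigl(0,(r+\lambda)/\mu_1\bigr]$, equivalently gives $\xi_2\ge0$; a short computation identifies it with the root written in (\ref{eq:2}). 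For $r=0$ the quadratic factors as $\xi_1\bigl(a\mu_1\xi_1-\lambda\rho\bigr)=0$ (one checks $ab-c=\lambda\rho$), so besides $(0,\lambda/\mu_2)$ there is a root $\xi_1=\lambda\rho/(a\mu_1)$, $\xi_2=\lambda(1-p_I)/a$, and since $\rho\le a$ this lies in $\R_+\times\R_+$ exactly when $\rho>0$.

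\textbf{Convergence.} Consider a trajectory staying in $\{\psi_1>0\}$ (when $r=0$, $x_1^0=0$ the statement is trivial because $\psi_1\equiv0$ and $\psi_2'=\lambda-\mu_2\psi_2$ is linear). By the bounds above it lies in a compact subset of the closed quadrant bounded away from $(0,0)$, so the Poincar\'e--Bendixson theorem shows its $\omega$-limit set is an equilibrium, a periodic orbit, or a cyclic chain of equilibria joined by orbits. Periodic orbits and such chains inside the open quadrant are excluded by the Bendixson--Dulac criterion with the multiplier $\varphi=(\psi_1\psi_2)^{-1}$: a direct computation, in which the two $\alpha p$-terms cancel, gives
\begin{equation*}
  \operatorname{div}(\varphi F)=\frac{1}{\psi_1\psi_2}\left(\frac{\lambda}{\psi_2}\Bigl(\frac{p_I\psi_1}{\psi_1+\psi_2}-1\Bigr)-\frac{r}{\psi_1}\right)<0 \quad\text{on }\{\psi_1>0,\ \psi_2>0\},
\end{equation*}
since $p_I\le1$ and $\psi_1<\psi_1+\psi_2$. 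If $r>0$ the field points strictly inward on $\{\psi_1=0\}$ (and, if $p_I<1$, on $\{\psi_2=0\}$), so the $\omega$-limit set lies in the open quadrant and must be the unique equilibrium $(\xi_1,\xi_2)$ of (\ref{eq:2}); this settles the $r>0$ case. If $r=0$ the positive $\psi_2$-axis is invariant, so an $\omega$-limit set meeting it must contain $(0,\lambda/\mu_2)$, whose linearization has eigenvalues $-\mu_2$ and $\rho$ (in the borderline case $\rho=0$ a centre-manifold reduction gives $\psi_1'\sim-\lambda^{-1}\mu_2(p_I\mu_2+\alpha p)\psi_1^2<0$). Hence if $\rho\le0$ the point $(0,\lambda/\mu_2)$ is attracting from $\{\psi_1>0\}$ and, being the only admissible equilibrium, is the limit of every trajectory with $x_1^0>0$; while if $\rho>0$ it is a saddle whose stable manifold is the $\psi_2$-axis, so it cannot lie in the $\omega$-limit of a trajectory with $\psi_1>0$ (else $\psi_1\to0$, contradicting $\psi_1'\sim\rho\psi_1>0$ nearby) and nor can a cyclic chain through it, whence the $\omega$-limit is the interior equilibrium $(\xi_1,\xi_2)$.

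\textbf{Where the work is.} Existence, uniqueness and the identification of the fixed points are essentially bookkeeping; the heart of the proof is the global convergence, and the decisive device is the Dulac multiplier $(\psi_1\psi_2)^{-1}$ — the fortunate point being that the infection terms drop out of $\operatorname{div}(\varphi F)$, leaving a sign-definite expression and thereby forbidding periodic orbits and homoclinic/heteroclinic loops. The remaining delicate point is the careful treatment of the boundary in the Poincar\'e--Bendixson argument: the invariance of the $\psi_2$-axis when $r=0$, the tangency of the field on $\{\psi_2=0\}$ when $p_I=1$, and the non-hyperbolic equilibrium when $\rho=0$.
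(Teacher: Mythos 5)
Your proof is correct in substance but follows a genuinely different route from the paper's for the hardest part, the global convergence. The paper exploits the linear combination $\psi_1+\psi_2$: its derivative is $r+\lambda-\mu_1\psi_1-\mu_2\psi_2$, so every trajectory is attracted to the line $A^0=\{\mu_1x_1+\mu_2x_2=r+\lambda\}$, a LaSalle-type argument then shows the maximal invariant subset of $A^0$ is the relevant equilibrium (with a separate treatment of $\mu_1=\mu_2$, where $A^0$ is itself invariant, versus $\mu_1\neq\mu_2$), and Poincar\'e--Bendixson concludes; the fixed points are located geometrically, by intersecting $A^0$ with the curve $x_1=h(x_2)$ and studying the monotone diffeomorphism $h$. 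You instead find the equilibria algebraically (the quadratic $a\mu_1\xi_1^2-(ab-c)\xi_1-br=0$, which does reduce to $\xi_1(a\mu_1\xi_1-\lambda\rho)=0$ when $r=0$), and you rule out periodic orbits and graphics by the Bendixson--Dulac criterion with multiplier $(\psi_1\psi_2)^{-1}$; your computation of $\operatorname{div}(\varphi F)$ is correct, the $\alpha p$ terms do cancel and the sign is strictly negative on the open quadrant. Your route avoids the paper's case split on $\mu_1=\mu_2$ and is arguably more robust to perturbations of the vector field, whereas the paper's route gives the extra geometric information that orbits accumulate on the explicit line $A^0$ and never has to worry about the singularity of a Dulac multiplier. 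Three small points you should tighten: the extension of solutions to \emph{negative} times is asserted rather than proved (your a priori bounds control forward time only); when $p_I=1$ and $r>0$ the axis $\{\psi_2=0\}$ is invariant and the unique equilibrium sits on it, so your claim that the $\omega$-limit set lies in the open quadrant fails there and the Dulac exclusion of a homoclinic graphic through that boundary point needs an extra word because $\varphi$ blows up on the axis; and the centre-manifold computation for $\rho=0$ is stated without justification, though only its sign is needed.
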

\begin{proof}
  We denote by $f_1$ and $f_2$ the functions such that (\ref{eq:1}) is
  written
  \begin{equation}\label{eq:12}
    \psi_1^\prime(t)=f_1(\psi_1(t),\, \psi_2(t)) \text{ and }  \psi_1^\prime(t)=f_2(\psi_1(t),\, \psi_2(t)).
  \end{equation}
  Since $f_1$ and $f_2$ are locally Lipschitz, there exists a local
  solution for any starting point $x^0$ belonging to $(\R_+\times
  \R_+)\backslash \left\{(0,0)\right\}$. Moreover, for any $(x_1,\,
  x_2)\in (\R_+\times \R_+)\backslash \left\{(0,0)\right\}$,
  \begin{equation*}
    r -\mu_1 x_1\le f_1(x_1,x_2)\le r+\lambda p_I+\alpha p x_1 \text{ and }
    \lambda(1-p_I)-\mu_2x_2\le f_2(x_1,x_2)\le \lambda.
  \end{equation*}
  By standard theorems about comparison of solutions of differential
  equations, one can then show that every local solution ${\psi}$ can be
  extended to $\R$ and that for any $t\in \R$, $\psi(t)=(\psi_1(t),\,
  \psi_2(t))$ belongs to $(\R_+\times \R_+)\backslash
  \left\{(0,0)\right\}$.  Furthermore, with direct calculations, we
  have
  \begin{equation}\label{eq:3}
    \frac{\d}{\d t}(\psi_1(t)+\psi_2(t))=r+\lambda-\mu_1\psi_1(t)-\mu_2\psi_2(t).
  \end{equation}
  For $\ee>0$, consider

  \begin{align*}
    A_\pm^\ee&=\{(x_1,\, x_2)\in \R_+\times \R_+,\ 0\leq \pm( r+\lambda-\mu_1x_1-\mu_2x_2)< \ee\},\\
    B_+^\ee&=\{(x_1,\, x_2)\in \R_+\times \R_+,\  r+\lambda-\mu_1x_1-\mu_2x_2\geq  \ee\},\\
    B_-^\ee&=\{(x_1,\, x_2)\in \R_+\times \R_+,\  r+\lambda-\mu_1x_1-\mu_2x_2\leq  -\ee\},
  \end{align*}
  and
  \begin{equation*}
    A^0=\{(x_1,\, x_2)\in \R_+\times \R_+,\  r+\lambda-\mu_1x_1-\mu_2x_2=0\}.
  \end{equation*}
  According to (\ref{eq:3}), on $B_+^\ee$, the derivative of
  $\psi_1+\psi_2=\|(\psi_1,\, \psi_2)\|$ is greater than $\ee$, hence
  for a starting point in $A_+^\ee$, the trajectory has an $\L^1$
  increasing norm. Reasoning along the same lines on $B_-^\ee$, we
  see that for any $\eta>0$, for any starting point outside $A^0$, the
  trajectory of the differential system enters, in a finite time, one
  of the set $A_+^\eta$ or $A_-^\eta. $ Moreover, upon this time, the
  orbit stays in the compact $A_+^\eta\cup A_-^\eta$ forever.  It
  follows that (see for instance \cite{MR1422255})
  \begin{equation*}
    \label{eq:5}
    \lim_{t\to+\infty}\text{dist}\Bigl((\psi_1(t),\,\psi_2(t)),\ A^0\Bigr)=0.
  \end{equation*}
  This implies that any invariant set $M$ must be included in $A^0$.
  We then seek for a maximal invariant set. It is given by the
  intersection of the sets $Z_i=\{(x_1,\, x_2),\, f_i(x_1,\,
  x_2)=0\}$, $i=1,\, 2.$ We then remark that this system of equation
  is equivalent to the system
  \begin{math}
    f_1+f_2=0
  \end{math}
  and $f_2=0.$ It turns out that
  \begin{equation*}
    (f_1+f_2)(x_1,\, x_2)= r+\lambda -\mu_1x_1-\mu_2x_2=0.
  \end{equation*}
  The equation $f_2(x_1,x_2)=0$ yields to
  \begin{equation*}
    x_1=\frac{\mu_2x_2^2-\lambda x_2}{\lambda(1-p_I)-(\alpha p+\mu_2)x_2}=h(x_2).
  \end{equation*}
  The variations of $h$ shows that $h$ is a strictly decreasing
  diffeomorphism from $I=[\lambda(1-p_I)/(\alpha p+\mu_2),\,
  \lambda/\mu_2]$ onto $\R_+$. Hence its reciprocal function is a
  decreasing diffeomorphism from $\R_+$ onto $I$.

  Assume first that $r>0$. Then $(\lambda+r)/\mu_2>\lambda/\mu_2$ and
  there exists one and only one equilibrium point whose coordinates
  $({\xi}_1,{\xi}_2)$ are thus given by the solution of (\ref{eq:2})
  -- see Figure \ref{fig:fixed} for an illustration.


\begin{figure}[!ht]
  \begin{center}
   \leavevmode
   \includegraphics*[width=0.8\textwidth]{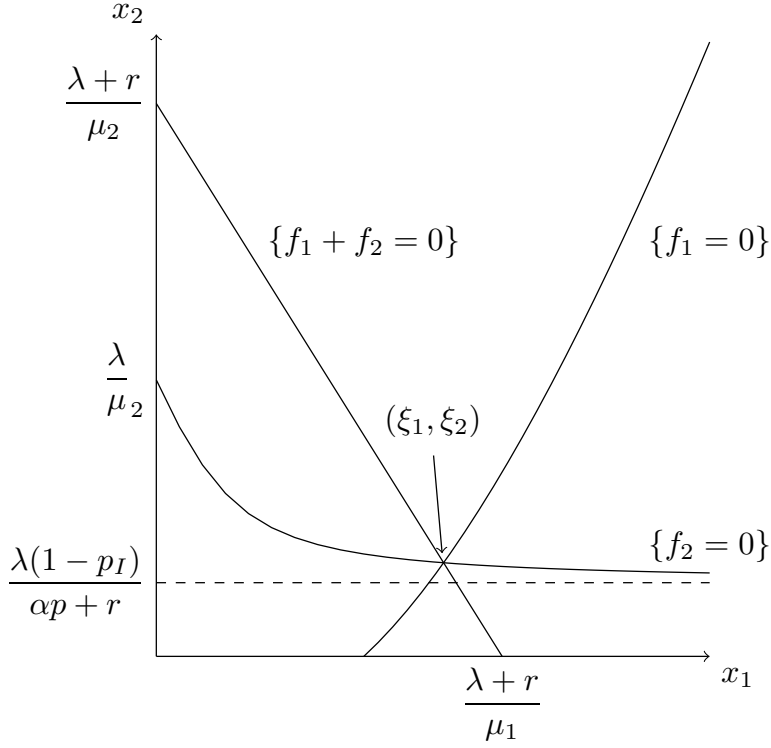}
   \caption{Determination of the fixed point.}
   \label{fig:fixed}
 \end{center}
\end{figure}

Consider the two distinct situations where $\mu_1=\mu_2$ and
$\mu_1\neq \mu_2$.  If $\mu_1\neq\mu_2$, then for any starting point
$(x_1,\, x_2)\neq (\xi_1,\, \xi_2)$ belonging to $A^0$,
$\psi_1^\prime(x_1,\, x_2)+\psi_2^\prime(x_1,\, x_2)=0$ but
$\mu_1\psi_1^\prime(x_1,\, x_2)+\mu_2\psi_2^\prime(x_1,\, x_2)\neq
0$. Hence for $t$ sufficiently close to $0$, $\psi(t)$ does not belong
to $A^0$ and then $(x_1,\, x_2)$ does not belong to $M$. Thus,
$M=\{(\xi_1,\, \xi_2)\}$ and according to the Poincar\'e-Bendixson
theorem (see \cite{MR1422255} for example),
\begin{equation}\label{eq:4}
  \lim_{t\to+\infty}(\psi_1(t),\psi_2(t))=(\xi_1,\, \xi_2).
\end{equation}
If $\mu_1=\mu_2$ then $\psi_1+\psi_2$ is solution of the differential
equation
\begin{equation*}
  v^\prime(t)=r+\lambda  -\mu_1 v(t),\ v(0)=x_1+x_2.
\end{equation*}
By direct integration, this yields to
\begin{equation*}
  (\psi_1+\psi_2)(t)=(x_1+x_2)e^{-\mu_1 t}+\frac{r+\lambda}{\mu_1}(1-e^{-\mu_1 t}).
\end{equation*}
This entails that $A^0$ is invariant. Since $A^0$ is compact, there
exists a minimum invariant set, say $M$. According to the
Poincar\'e-Bendixson theorem, $M$ is either a periodic orbit or a
critical point. Since $\psi_1+\psi_2$ is not periodic, $M$ is also
reduced to $(\xi_1,\, \xi_2)$ and we have (\ref{eq:4}).

For $r=0$, the point $(0,\, \lambda/\mu_2)$ is a fixed point. Due to
the concavity of $h^{-1}$, the sets $A^0$ and $\{f_2=0\}$ have at most
one point of intersection with positive abscissa. The existence of it
depends on the slope of $h^{-1}$ at the origin. By direct
computations, we find that
\begin{equation*}\label{eq:7}
  (h^{-1})^\prime(0)=-(\frac{\alpha p}{\mu_2}+p_I).
\end{equation*}
Hence there exists another equilibrium point if and only if $
(h^{-1})^\prime(0)>\mu_1/\mu_2$, i.e., ${\rho}=\alpha p +\mu_2
p_I-\mu_1>0$. We still denote by $(\xi_1,\, \xi_2)$ the unique
solution of (\ref{eq:2}) with a strictly positive first
coordinate. Note first that if $x_1=0$ then $\psi_1(t)=0$ for any $t$
thus the vertical axis is an invariant set. Moreover, for $x_1=0$, a
direct integration of (\ref{eq:1}) shows that
\begin{equation*}\label{eq:6}
  \lim_{t\to+\infty}(\psi_1(t),\psi_2(t))=(0,\lambda/ \mu_2).
\end{equation*}
We hereafter assume that $x_1\neq 0$.  If $\alpha p +\mu_2
p_I-\mu_1\le 0$, the same reasoning as above shows that
\begin{equation*}\label{eq:8}
  \lim_{t\to+\infty}(\psi_1(t),\psi_2(t))=(0,\, \lambda,/ \mu_2).
\end{equation*}
Assume now that $\alpha p +\mu_2 p_I-\mu_1> 0.$ At $(0,\,
\lambda/\mu_2),$ the linearization of (\ref{eq:1}) gives a matrix
whose
determinant
is given by
\begin{equation*}
  d=-{\rho}\mu_2.
\end{equation*}
Then, according to the hypothesis, $d<0$ 
thus $(0,\, \lambda/\mu_2)$ is a saddle point and cannot be an
attractor. Reasoning as above again yields to the conclusion that
every orbit converges to $ (\xi_1,\, \xi_2)$ for any $(x_1,\, x_2)$
such that $x_1\neq 0$.
\end{proof}
\section{Mean field approximation}
\label{sec:mean-field-appr}
We now consider a sequence $(X^N(t)=(X_1^N(t),\, X_2^N(t)),\, t\ge 0)$
of Markov processes with the same transitions as above but with
different rates given by (with self evident notations):
\begin{align*}
  q_1^N(n_1,\, n_2)&=r_N + \lambda_N\,p_I\dfrac{n_1}{n_1+n_2}\\
  q_2^N(n_1,\, n_2)&=\mu_1\, n_1\\
  q_3^N(n_1,\, n_2)&=\alpha\, p \,n_2\,\dfrac{n_1}{n_1+n_2}\\
  q_4^N(n_1,\, n_2)&=\lambda_N(1-p_I\dfrac{n_1}{n_1+n_2})\\
  q_5^N(n_1,\, n_2)&=\mu_2\, n_2.
\end{align*}
The main result of this Section is the following mean field
approximation of the system $X^N$.
\begin{theorem}
  \label{thm:mean-field-appr}
  Assume that
  \begin{equation*}
    \esp{\left\| \frac 1 N X^N(0)-x^0\right\|^2}\xrightarrow{N\to +\infty} 0,\
    \frac 1 N\, r_N \xrightarrow{N\to +\infty} r\ge 0, \
    \frac 1 N \,\lambda_N \xrightarrow{N\to +\infty} \lambda.
  \end{equation*}
  Let $\psi(x^0,.)=(\psi_1(x^0,.),\, \psi_2(x^0,.))$ be the solution of the differential
  system (\ref{eq:1}).
  Then, for any $T>0$,
  \begin{equation*}
    \esp{\sup_{t\le T}\left\| \frac 1 N X^N(t)-\psi(x^0,t)\right\|^2}\xrightarrow{N\to +\infty} 0.
  \end{equation*}
\end{theorem}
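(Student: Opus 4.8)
The plan is to write the rescaled process $\bar X^N = \frac1N X^N$ as a semimartingale via Dynkin's lemma, compare it to the deterministic flow $\psi(x^0,\cdot)$ through a Gronwall-type argument, and control the martingale part by its predictable bracket. First I would apply Dynkin's Lemma (Section 2) to the coordinate functions $F(n_1,n_2)=n_i$ for the generator $Q^N$ built from $q_1^N,\dots,q_5^N$. This yields, for each $N$, a decomposition
\begin{equation*}
  \bar X^N(t) = \bar X^N(0) + \int_0^t b_N\bigl(\bar X^N(s)\bigr)\d s + M^N(t),
\end{equation*}
where $M^N=(M_1^N,M_2^N)$ is a local martingale and $b_N$ is the (rescaled) drift vector read off from the transition rates; because of the scaling hypotheses $r_N/N\to r$, $\lambda_N/N\to\lambda$, the function $b_N$ converges, uniformly on compact subsets of $(\R_+^2)\setminus\{0\}$ away from the origin, to the vector field $(f_1,f_2)$ of the differential system \eqref{eq:1}. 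Lemma \ref{lem:borne_sup} guarantees that $\esp{\sup_{t\le T}\|\bar X^N(t)\|^p}$ is bounded uniformly in $N$, so all the integrals are genuine martingales and the trajectories stay in a fixed compact set with high probability.

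Next I would estimate the bracket $\<<M^N\>>$. Each jump of $X^N$ changes a coordinate by $\pm1$, so after rescaling the jumps are of size $1/N$; hence $\la M_i^N\ra_t$ is $\frac1{N^2}$ times an integral of the total jump rate, which by Lemma \ref{lem:borne_sup} is $O(N)$ on $[0,T]$. Thus $\esp{\la M_i^N\ra_T} = O(1/N)$, and by Doob's $\L^2$ inequality $\esp{\sup_{t\le T}\|M^N(t)\|^2}\to 0$. For the drift comparison, set $u^N(t)=\bar X^N(t)-\psi(x^0,t)$. Subtracting the integral equation for $\psi$ from that for $\bar X^N$ and splitting $b_N(\bar X^N)-f(\psi) = \bigl(b_N(\bar X^N)-f(\bar X^N)\bigr) + \bigl(f(\bar X^N)-f(\psi)\bigr)$, the first term is a uniform error $\varepsilon_N\to0$ (on the compact set where the trajectories live), and the second is controlled by the local Lipschitz constant $L$ of $f$ on that compact set. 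This gives
\begin{equation*}
  \esp{\sup_{s\le t}\|u^N(s)\|^2} \le C\Bigl(\esp{\|u^N(0)\|^2} + \varepsilon_N^2 + \esp{\sup_{s\le T}\|M^N(s)\|^2}\Bigr) + CL^2\int_0^t \esp{\sup_{\sigma\le s}\|u^N(\sigma)\|^2}\d s,
\end{equation*}
and Gronwall's lemma finishes the argument, since each term in the parentheses tends to $0$ by hypothesis, by the uniform-on-compacts convergence of $b_N$, and by the bracket estimate respectively.

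The main obstacle is the singularity of the vector field at the origin: $f_1,f_2$ are only \emph{locally} Lipschitz on $(\R_+^2)\setminus\{0\}$, and the fractions $n_1/(n_1+n_2)$ are not globally Lipschitz near $(0,0)$. I would handle this by noting that, by Theorem \ref{thm:asymptotique_syst_diff}, $\psi(x^0,t)$ stays in a compact subset $\mathcal K$ of $(\R_+^2)\setminus\{0\}$ for $t\in[0,T]$, in fact bounded away from the origin; combined with the uniform moment bound from Lemma \ref{lem:borne_sup} and a stopping-time argument (stopping $\bar X^N$ when it leaves a slightly larger compact set $\mathcal K'$), one restricts all estimates to $\mathcal K'$, where $f$ is Lipschitz and $b_N\to f$ uniformly; the probability of the stopping time occurring before $T$ is shown to vanish using the Gronwall estimate itself (a standard bootstrapping) together with Lemma \ref{lem:borne_sup} to bound the contribution of the rare event in $\L^2$. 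A secondary technical point is checking that $b_N\to f$ uniformly on $\mathcal K'$ despite $r_N,\lambda_N$ only converging after division by $N$; this is immediate once one writes $b_N$ explicitly and uses $|r_N/N-r|+|\lambda_N/N-\lambda|\to0$.
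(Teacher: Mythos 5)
Your skeleton is the same as the paper's: the semimartingale decomposition of $X^N$ via the martingale problem (the paper isolates this as Theorem \ref{thm:martingale problem}, with the explicit square bracket), the bound $\esp{|\<< M^N\>>_T|}=O(N)$ from Lemma \ref{lem:borne_sup} giving $\esp{\sup_{t\le T}\|N^{-1}M^N(t)\|^2}=O(1/N)$ via Burkholder--Davis--Gundy, and a Gronwall comparison of the drifts. Where you genuinely diverge is in the treatment of the singularity at the origin. You localize: stop $N^{-1}X^N$ on leaving a compact set $\mathcal K'$ bounded away from $(0,0)$, use the local Lipschitz constant of $f$ on $\mathcal K'$, and bootstrap to show the stopping time exceeds $T$ with high probability. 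The paper instead avoids localization entirely by proving the \emph{asymmetric} estimate $|x_1/(x_1+x_2)-y_1/(y_1+y_2)|\le 2\|x-y\|/\|y\|$ (and the global Lipschitz bound for $x_1x_2/(x_1+x_2)$), in which the denominator involves only $y$; taking $y=\psi(x^0,s)$, which by Theorem \ref{thm:asymptotique_syst_diff} satisfies $\inf_s\|\psi(x^0,s)\|>0$, the Gronwall factor $\exp\bigl(T\int_0^T(1+\|\psi(x^0,s)\|^{-1})^2\,\mathrm{d}s\bigr)$ is deterministic and finite, and no control on where $N^{-1}X^N$ itself lives is needed. This yields the clean quantitative bound of Lemma \ref{lem:control-ci}, conditional on the initial data, which is then reused verbatim in the stationary-regime section; your version would have to be re-localized there.

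One point in your plan needs care: the $\L^2$ control of the rare event $\{\tau_N\le T\}$. A Cauchy--Schwarz bound $\esp{\sup_t\|u^N(t)\|^2\car_{\{\tau_N\le T\}}}\le \esp{\sup_t\|u^N(t)\|^4}^{1/2}\,\P(\tau_N\le T)^{1/2}$ requires a fourth moment of $\sup_t\|N^{-1}X^N(t)\|$, hence (via Lemma \ref{lem:borne_sup}) a fourth moment of $N^{-1}X^N(0)$, which the hypothesis does not provide --- only $\L^2$ convergence of the initial condition is assumed. This is fixable (condition on $X^N(0)$ as the paper does, or truncate the initial condition first), but as stated the ``standard bootstrapping'' step has a gap that the paper's asymmetric Lipschitz trick sidesteps altogether.
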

Before turning into the proof of Theorem \ref{thm:mean-field-appr},
let us give the martingale problem satisfied by the process $X^N$.
\begin{theorem}
  \label{thm:martingale problem}
  For any $N>0,$ the process $X^N$ is a vector-valued semi-martingale
  with decomposition:
  \begin{align*}
    X_1^N(t)=&X_1^N(0)+\int_0^t (q_1^N+q_3^N-q_2^N)(X^N(s))\d s+ M_1^N(t)\\
    X_2^N(t)&=X_2^N(0)+\int_0^t (q_4^N-q_3^N-q_5^N)(X^N(s))\d
    s+M_2^N(t),
  \end{align*}
  where $M^N=(M^N_1,\, M^N_2)$ is a local martingale vanishing at zero
  with square bracket given by:
  \begin{equation*}
    \<< M^N\>>_t=
    \begin{pmatrix}
      \displaystyle \int_0^t(q_1^N+q_3^N+q_2^N)(X^N(s))\d s  & \displaystyle -\int_0^t q_3^N(X^N(s))\d s\\
      &\\
      \displaystyle -\int_0^tq_3^N(X^N(s))\d s & \displaystyle
      \int_0^t(q_4^N+q_3^N+q_5^N)(X^N(s))\d s
    \end{pmatrix}.
  \end{equation*}
\end{theorem}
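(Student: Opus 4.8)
The plan is to derive the whole statement from Dynkin's Lemma of Section~\ref{sec_preliminaries}, applied first to the coordinate projections and then to their pairwise products, combined with the integration by parts formula; the only analytic input is the moment bound of \lemref{lem:borne_sup}, which guarantees that all compensators are integrable and all stochastic integrals are genuine local martingales.

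\textbf{The semimartingale decomposition.} Let $F_1(n_1,n_2)=n_1$ and $F_2(n_1,n_2)=n_2$. Reading the transitions off Figure~\ref{fig:transitions}, the $q_1^N$-jump sends $(n_1,n_2)$ to $(n_1+1,n_2)$, the $q_2^N$-jump to $(n_1-1,n_2)$, the $q_3^N$-jump to $(n_1+1,n_2-1)$, the $q_4^N$-jump to $(n_1,n_2+1)$, and the $q_5^N$-jump to $(n_1,n_2-1)$. Substituting in $QF(x)=\sum_{y\neq x}(F(y)-F(x))q_{xy}$ gives at once $QF_1=q_1^N+q_3^N-q_2^N$ and $QF_2=q_4^N-q_3^N-q_5^N$. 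Since each $q_j^N$ is at most affine in $\|X^N\|$ (the ratios $n_1/(n_1+n_2)$ being bounded by $1$), \lemref{lem:borne_sup} yields $\esp{\int_0^T|QF_i(X^N(s))|\d s}<\infty$, so Dynkin's Lemma applies and
\begin{equation*}
  M_i^N(t):=X_i^N(t)-X_i^N(0)-\int_0^t QF_i(X^N(s))\d s,\qquad i=1,2,
\end{equation*}
is a local martingale, clearly null at $t=0$. This is the announced decomposition of $X^N=(X_1^N,X_2^N)$.

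\textbf{The bracket.} I would then apply Dynkin's Lemma to the three products $F_iF_j$ ($=n_1^2,\ n_1n_2,\ n_2^2$); the relevant compensator is now at most quadratic in $\|X^N\|$, still integrable by \lemref{lem:borne_sup}, so
\begin{equation*}
  F_i(X^N(t))F_j(X^N(t))-F_i(X^N(0))F_j(X^N(0))-\int_0^t Q(F_iF_j)(X^N(s))\d s
\end{equation*}
is a local martingale. On the other hand, $A_i^N(t):=\int_0^t QF_i(X^N(s))\d s$ is continuous with finite variation, hence has zero quadratic covariation with any semimartingale, so from $X_i^N=X_i^N(0)+A_i^N+M_i^N$ one gets $[X_i^N,X_j^N]=[M_i^N,M_j^N]$. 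Expanding $X_i^N(t)X_j^N(t)-X_i^N(0)X_j^N(0)$ with the integration by parts formula, substituting $\d X_i^N=QF_i(X^N)\d s+\d M_i^N$, and discarding the stochastic integrals $\int X_i^N(s_-)\d M_j^N(s)$ (local martingales, their integrands being cadlag adapted, hence locally bounded), I obtain that
\begin{equation*}
  [M_i^N,M_j^N]_t-\int_0^t\bigl(Q(F_iF_j)-F_iQF_j-F_jQF_i\bigr)(X^N(s))\d s
\end{equation*}
is a local martingale, using that $F_i(X^N(s))=X_i^N(s)$. The subtracted integral being right-continuous, predictable and of finite variation, the uniqueness statement in the definition of the Meyer process (Section~\ref{sec_preliminaries}) forces
\begin{equation*}
  \la M_i^N,M_j^N\ra_t=\int_0^t\Gamma(F_i,F_j)(X^N(s))\d s,\quad \Gamma(F,G)(x)=\sum_{y\neq x}(F(y)-F(x))(G(y)-G(x))q_{xy}.
\end{equation*}
Evaluating $\Gamma$ on the five jumps listed above (e.g. the $q_4^N$- and $q_5^N$-jumps leave $F_1$ unchanged, so contribute a zero factor) gives $\Gamma(F_1,F_1)=q_1^N+q_2^N+q_3^N$, $\Gamma(F_2,F_2)=q_3^N+q_4^N+q_5^N$ and $\Gamma(F_1,F_2)=-q_3^N$, which is precisely the matrix $\<< M^N\>>_t$ of the statement.

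\textbf{Where the work is.} There is no real obstacle: the argument is bookkeeping with Dynkin's Lemma, the core being the identification of $QF_i$, $Q(F_iF_j)$ and the carré du champ with the prescribed rates, which is a finite computation. The only points needing a touch of care are the integrability of the compensators and the local-martingale property of the stochastic integrals, both subsumed in \lemref{lem:borne_sup}, and the last passage from ``$[M^N]-\int\Gamma\,\d s$ is a local martingale'' to the equality for $\la M^N\ra$, which is exactly the uniqueness of the predictable finite-variation compensator recalled in the preliminaries.
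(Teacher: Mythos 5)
Your proof is correct and follows essentially the same route as the paper: Dynkin's Lemma applied to the coordinate functions gives the decomposition, and integration by parts combined with Dynkin's Lemma applied to the products $F_iF_j$, together with uniqueness of the predictable finite-variation compensator, identifies the bracket. The only cosmetic difference is that you package the final computation as the carr\'e du champ $\Gamma(F_i,F_j)=Q(F_iF_j)-F_iQF_j-F_jQF_i$, whereas the paper carries out the same cancellation explicitly for the off-diagonal term and invokes ``similar arguments'' for the diagonal ones.
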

\begin{proof}
  Using the martingale problem associated with the Markov process
  $X^N$, we get that, for $t\geq 0$,
  \[
  X^N(t) = X^N(0) + \left(
    \begin{array}{c}
      \displaystyle \int_0^t (q_1^N+q_3^N-q_2^N)(X^N(s))\d s\\
      \displaystyle \int_0^t (q_4^N-q_3^N-q_5^N)(X^N(s))\d s
    \end{array}
  \right) +M^N(t),
  \]
  where $M^N=(M^N_1,\, M^N_2)$ is a 2-dimensional local martingale
  vanishing at zero.

  Let us now compute its square bracket. First of all, we consider
  $\langle M_1^N,M_2^N\rangle$. By integration by parts, we get that,
  for $t\geq 0$,
  \begin{align*}
    X_1^N(t)X_2^N(t) = X_1^N(0)X_2^N(0) &+
    \int_{(0,t]} X_1^N(s_-)\d X_2^N(s)\\
    &+ \int_{(0,t]} X_2^N(s_-)\d X_1^N(s) + [X_1^N,X_2^N]_t,
  \end{align*}
  where $[X_1^N,X_2^N]$ denotes the mutual variation of $X_1^N$ and
  $X_2^N$. Hence
  \begin{align*}
    X_1^N(t)X_2^N(t) =X_1^N(0)X_2^N(0)
    &+\int_0^t X_1^N(s)(q_4^N-q_3^N-q_5^N)(X^N(s))\d s \\
    &+\int_0^t X_2^N(s)(q_1^N+q_3^N-q_2^N)(X^N(s))\d s\\
    &+\,[X_1^N,X_2^N]_t\vphantom{\int_0^t}\\
    &+ \text{local martingale}.
  \end{align*}
  Now, writing the martingale problem associated with the process
  $X_1^NX_2^N$, we have
  \begin{align*}
    X_1^N(t)X_2^N(t) =X_1^N(0)X_2^N(0) &+\int_0^t X_1^N(s)
    (q_4^N-q_5^N)(X^N(s))\d s\\
    &+\int_0^t X_2^N(s)
    (q_1^N-q_2^N)(X^N(s))\d s\\
    &+\int_0^t
    (X_2^N(s)-X_1^N(s)-1)q_3^N(X^N(s))\d s\\
    &+\vphantom{\int_0^t} \text{local martingale}.
  \end{align*}
  We conclude that
  \[
  \la X_1^N,X_2^N\ra_t=-\int_0^t q_3^N(X_N(s))\d s.
  \]
  Similar arguments show that
  \[
  \la X_1^N\ra _t=\int_0^t(q_1^N+q_3^N+q_2^N)(X^N(s))\d s \text{ and
  }\la X_2^N\ra _t=\int_0^t(q_4^N+q_3^N+q_5^N)(X^N(s))\d s
  \]
  which ends the proof.
\end{proof}
\begin{proof}[Proof of Theorem \protect\ref{thm:mean-field-appr}]
According to Theorem   \ref{thm:asymptotique_syst_diff}, 
for any $x^0\in {\mathbf R}_+ \times {\mathbf R}_+\setminus \{(0,0)\}$
$\inf_{ s \in {\mathbf R}_+}\|\psi(x^0,s)\| >0.$
Then, the theorem \ref{thm:mean-field-appr} is a consequence of the following Lemma.
\end{proof}

\begin{lemma}\label{lem:control-ci}
There exists a constant $C$ depending only on $r,$ $\lambda,$ $P_I,$ $\mu_1,$
$\mu_2$ and $\alpha p$ such that for any $x^0 \in {\mathbf R}_+ \times {\mathbf R}_+ \setminus \{(0,0)\},$
 any $(X^M(0))_{M \in {\mathbf N}}$ sequence of random variables taking its values in $ {\mathbf R} \times {\mathbf R} \setminus \{(0,0)\},$  for any $N \in {\mathbf N}^*$, and  for any $T>0$
 \begin{multline*}
 {\mathbf E}\left[\sup_{t \leq T} \left\| \frac{1}{N}X^N(t) -
     \psi(x^0,t) \right\|\, \biggl | \, \sigma(X^M(0),\, M \in \N)\right]\\
 \leq  \left( \left\| \frac{1}{N}X^N(0) -x^0 \right\|^2 + \frac{1}{N} \left(T + T^2 \frac{1}{N} \|X^N(0)\|\right) \right)\\
 \times \exp \left(T\int_0^T ( 1 + \frac{1}{\|\psi(x^0,s)\|})^2 ds\right).
 \end{multline*}
 \end{lemma}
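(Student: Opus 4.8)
\emph{Proof outline.} The plan is to set $Z^N:=\frac1N X^N-\psi(x^0,\cdot)$, to decompose it into an initial error, a drift difference and a rescaled martingale, and then to close a Gronwall estimate on $t\mapsto\esp{\sup_{s\le t}\|Z^N(s)\|^2\mid\sigma(X^M(0),M\in\N)}$. Combining the semimartingale decomposition of \thmref{thm:martingale problem} with the integral form of (\ref{eq:1}), and using that the rates $q_i^N$ are homogeneous (degree $0$ in the fractions $n_1/(n_1+n_2)$, degree $1$ in the linear parts), so that $\frac1N(q_1^N+q_3^N-q_2^N)(Nm)=f_1^N(m)$ and $\frac1N(q_4^N-q_3^N-q_5^N)(Nm)=f_2^N(m)$ where $f^N=(f_1^N,f_2^N)$ is the right-hand side of (\ref{eq:1}) with $(r,\lambda)$ replaced by $(r_N/N,\lambda_N/N)$, one gets
\begin{equation*}
 Z^N(t)=Z^N(0)+\int_0^t\bigl(f^N(\tfrac1N X^N(s))-f(\psi(x^0,s))\bigr)\d s+\tfrac1N M^N(t),
\end{equation*}
with $f=(f_1,f_2)$ the right-hand side of (\ref{eq:1}). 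I would then split $f^N(\tfrac1N X^N(s))-f(\psi(x^0,s))=\bigl(f^N(\tfrac1N X^N(s))-f^N(\psi(x^0,s))\bigr)+(f^N-f)(\psi(x^0,s))$, the last term being $O(|r_N/N-r|+|\lambda_N/N-\lambda|)$ uniformly on $[0,T]$ and harmless (one may even take $r_N/N=r$, $\lambda_N/N=\lambda$ without loss of generality).

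The heart of the argument is a Lipschitz bound for $f^N$ on $(\R_+\times\R_+)\setminus\{(0,0)\}$. Its only non-Lipschitz ingredients are $g(x)=x_1/(x_1+x_2)$ and $h(x)=x_1x_2/(x_1+x_2)$. From $x_1y_2-x_2y_1=x_1(y_2-x_2)+x_2(x_1-y_1)$ one obtains $|g(x)-g(y)|=|x_1y_2-x_2y_1|/(\|x\|\,\|y\|)\le\|x-y\|/\max(\|x\|,\|y\|)$, so the singularity of $g$ may always be charged to whichever of $x,y$ has the larger norm; and since $h(x)=x_1(1-g(x))$, the same bound gives $|h(x)-h(y)|\le 2\|x-y\|$, with no singularity at all. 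Together with the linear terms this yields
\begin{equation*}
 \|f^N(x)-f^N(y)\|\le C\Bigl(1+\frac1{\|y\|}\Bigr)\|x-y\|,
\end{equation*}
with $C$ depending only on $p_I,\mu_1,\mu_2,\alpha p$ (and, through $r_N/N,\lambda_N/N$, on $r,\lambda$ once $N$ is large). Applying this with $y=\psi(x^0,s)$ and recalling that $\inf_{s\in\R_+}\|\psi(x^0,s)\|>0$ by \thmref{thm:asymptotique_syst_diff}, the drift difference is bounded by $C(1+\|\psi(x^0,s)\|^{-1})\,\|Z^N(s)\|$.

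For the martingale term, \thmref{thm:martingale problem} gives $\trace\<<M^N\>>_T=\int_0^T(q_1^N+q_2^N+2q_3^N+q_4^N+q_5^N)(X^N(s))\d s$; since $q_1^N+q_4^N=r_N+\lambda_N$ and $q_2^N,q_3^N,q_5^N\le C\|X^N(s)\|$, \lemref{lem:borne_sup} (which bounds $\esp{\sup_{s\le T}\|X^N(s)\|\mid X^N(0)}$ by $\|X^N(0)\|+(r_N+\lambda_N)T$) together with Doob's $\L^2$-inequality gives, for $N$ large,
\begin{equation*}
 \esp{\sup_{t\le T}\bigl\|\tfrac1N M^N(t)\bigr\|^2\,\Big|\,\sigma(X^M(0),M\in\N)}\le\frac{C}{N}\Bigl(T+T^2\,\tfrac1N\|X^N(0)\|\Bigr).
\end{equation*}
With $u(t):=\esp{\sup_{s\le t}\|Z^N(s)\|^2\mid\sigma(X^M(0),M\in\N)}$, the inequality $\|a+b+c\|^2\le 3(\|a\|^2+\|b\|^2+\|c\|^2)$, Cauchy--Schwarz in time (which produces the prefactor $t$) and the Lipschitz bound give
\begin{equation*}
 u(t)\le 3\|Z^N(0)\|^2+\frac{C}{N}\Bigl(T+T^2\tfrac1N\|X^N(0)\|\Bigr)+3C^2\,t\int_0^t\Bigl(1+\frac1{\|\psi(x^0,s)\|}\Bigr)^2u(s)\,\d s;
\end{equation*}
since $s\mapsto 1/\|\psi(x^0,s)\|$ is bounded, the kernel is integrable on $[0,T]$ and Gronwall's lemma yields the asserted bound, the factor $\exp\bigl(T\int_0^T(1+\|\psi(x^0,s)\|^{-1})^2\d s\bigr)$ coming from the prefactor $t\le T$ times the kernel; the displayed $\L^1$ version follows from the $\L^2$ one by Jensen's inequality.

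The step I expect to be the main obstacle is the Lipschitz estimate: a crude Taylor bound would produce the factor $1+\|\tfrac1N X^N(s)\|^{-1}$, useless because the rescaled process may come close to the origin; the gain comes precisely from the sharp identity $|g(x)-g(y)|\le\|x-y\|/\max(\|x\|,\|y\|)$, which transfers the whole singularity onto $\psi$, where \thmref{thm:asymptotique_syst_diff} provides a uniform lower bound. Minor technicalities to dispatch along the way are the comparison of $(r_N/N,\lambda_N/N)$ with $(r,\lambda)$ and the (negligible, of $\d s$-measure zero) degeneracy of the transition rates at $(0,0)$.
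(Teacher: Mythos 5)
Your proposal is correct and follows essentially the same route as the paper: the same semimartingale decomposition from Theorem~\ref{thm:martingale problem}, the same key Lipschitz estimates on $x_1/(x_1+x_2)$ and $x_1x_2/(x_1+x_2)$ that charge the singularity to $\psi$ (where Theorem~\ref{thm:asymptotique_syst_diff} gives a uniform lower bound on $\|\psi(x^0,s)\|$), the same control of $\<< M^N\>>_T$ via Lemma~\ref{lem:borne_sup} and a maximal inequality, and the same Gronwall closure on the conditional second moment of the supremum. The only differences (Doob in place of Burkholder--Davis--Gundy, the marginally sharper bound $\|x-y\|/\max(\|x\|,\|y\|)$, and the explicit isolation of the $(f^N-f)(\psi)$ term) are cosmetic.
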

 \begin{proof}[Proof of Lemma \ref{lem:control-ci}]
   Let us fix $T>0$. Using Theorem~\ref{thm:martingale problem}, we
  have
  \begin{align*}
    \frac{1}{N}X_1^N(t)&=\frac{1}{N}X_1^N(0)+\int_0^{t} \frac{1}{N}(q_1^N+q_3^N-q_2^N)(X^N(s))\d s+ \frac{1}{N}M_1^N(t),\\
    \frac{1}{N}X_2^N(t)&=\frac{1}{N}X_2^N(0)+\int_0^{t} \frac{1}{N}
    (q_4^N-q_3^N-q_5^N)(X^N(s))\d s+\frac{1}{N}M_2^N(t).
  \end{align*}
  Moreover,
  \begin{align*}
    {\psi}_1(t)&= \int_0^t \left(q_1+q_3-q_2\right)({\psi}(s)) \d s,\\
    \psi_2(t)&=\int_0^t \left(q_4-q_3-q_5\right)({\psi}(s)) \d s.
  \end{align*}
  Note that for $x=(x^1,x^2)$ and $y= (y_1, y_2)$ in ${\mathbf R}_+ \times {\mathbf R}_+ \setminus \{(0,0)\},$
  then
  \begin{align*}
\left|\frac{x_1}{x_1+x_2}-\frac{y_1}{y_1+y_2}\right|
&\leq \left| \frac{x_1-y_1}{y_1+y_2} \right| + \left| \frac{x_1}{x_1+x_2} - \frac{x_1}{y_1 + y_2}\right| \\
&= \left| \frac{x_1-y_1}{y_1 +y_2} \right| + \left| \frac{x_1}{x_1 + x_2} \frac{ y_1-x_1 + y_2 -x_2}{y_1 +y_2} \right|\\
&\leq  2 \frac{\|x-y\|}{\|y\|}.
\end{align*}
We also have 
\begin{equation*}
\left| \frac{x_1x_2}{x_1+x_2} - \frac{y_1y_2}{y_1 + y_2} \right|\leq 2\| x- y\|.
\end{equation*}

  From now on, we use $\C$ for positive constants which depend only on
  $r$, ${\lambda}$, $p_I$, ${\mu}_1$, ${\mu}_2$ and ${\alpha}p$, and
  which may vary from line to line.  For $0\leq t\leq T$,

\begin{align}
  \nonumber
  &\left\| \frac{1}{N}X^N(t)-\psi(x^0,t)\right\|^2\\
  \label{eq:majo ZN}
 \leq  &\C \left( \left\|\frac{1}{N}X^N(0)-\psi(x^0,0)\right\|^2 +
    T^2\left|r-\frac{r_N}{N}\right|^2 +
    T^2\left|{\lambda}-\frac{{\lambda}_N}{N}\right|^2
  \right.\\
  \nonumber &\qquad+ \left.  T\int_0^t \left(1 + \frac{ 1}{\|\psi(x^0,s)\|}\right)^2\left\|
      \frac{1}{N}X^N(s)-\psi(x^0,s)\right\|^2\d s + \frac{1}{N^2}
    \left\|M^N(t)\right\|^2 \right).
\end{align}
Using Burkholder-Davis-Gundy inequality, we get that
\begin{align*} {\mathbb E}[\sup_{t\in [0,T
    ]}\left\|M^N(t)\right\|^2 | \sigma (X^M(0),~~M \in {\mathbf N})]
  &\leq \C {\mathbb E}[\left|\<< M^N\>>_{T}\right|  |\sigma (X^M(0),~~M \in {\mathbf N})].
\end{align*}
As a consequence of Lemma~\ref{lem:borne_sup}  we get that for $i\in
\left\{1,2,3,4,5\right\}$,
\begin{equation}
  \label{eq:10}
  \esp{\sup_{t\le T} q_i^N(X_s)}\leq \C ( \|X^N(0)\|+(r_N+\lambda_N)T),
\end{equation}
and
\begin{align*}
{\mathbb E}[\left|\<< M^N\>>_{T}\right|  | \sigma (X^M(0),~~~M \in {\mathbf N})] \leq C T( \esp{\|X^N(0)\|}+(r_N+\lambda_N)T).
\end{align*}
Hence, using Gronwall's lemma, (\ref{eq:majo ZN}) implies that
\begin{multline*}
 {\mathbf E}\left[\sup_{t \leq T} \left\| \frac{1}{N}X^N(t) - \psi(x^0,t) \right\| \, \biggl | \,\sigma(X^M(0),~~ M \in \N)\right]\\
 \leq  \left( \left\| \frac{1}{N}X^N(0) -x^0 \right\|^2 + \frac{1}{N} \left(T + T^2 \frac{1}{N} \|X^N(0)\|\right) \right)\\
 \times \exp \left(T\int_0^T ( 1 + \frac{1}{\|\psi(x^0,s)\|})^2 ds\right).
 \end{multline*}

\end{proof}

\section{Stationary regime}
\label{sec:stationary-regime}

We have proved so far that the process $N^{-1}X^N$
converges, as $N$ goes to infinity, to a deterministic $\R^2$-valued
function. This function converges, as $t$ goes to infinity, to a fixed
point $\psi^\infty$. On the other hand, for each $N$, the Markov process $X^N$
is ergodic thus has a limiting distribution as $t$ goes to
infinity. This raises the natural question to know whether this
limiting distribution converges to the Dirac mass at $\psi^\infty$
when $N$ goes to infinity.  Let us denote by $\P_{Y^N,\, \nu}$ the
distribution of the process $Y^N=N^{-1}X^N$ under initial distribution
$\nu.$ We denote by $\P_{\psi,\, \nu}$ the distribution of the process
whose initial state is chosen according to $\nu$ and whose
deterministic evolution is then given by the differential system
(\ref{eq:1}). According to Theorem \ref{thm:ergodic}, we know that
$X^N$ has a stationary probability whose value is irrespective of the
initial distribution of $X^N$. We denote by $Y^N(\infty)$ a random
variable whose distribution is the stationary measure of $Y^N$.  We
already know that
\begin{equation*}
  \begin{CD}
    \P_{Y^N(t),\delta_{x^0}} @>N\to \infty>>\P_{\psi(t),\, \delta_{x^0}}\\
    @V{t\to \infty}VV  @VV{t\to \infty}V\\
    \P_{Y^N(\infty)} @>?>{N\to \infty}> \delta_{\psi^\infty}
  \end{CD}
\end{equation*}
The question is then to prove that this is a commutative diagram,
i.e., that $Y^N(\infty)$ converges in distribution to the Dirac
measure at the equilibrium point of the system (\ref{eq:1}).  We
borrow the proof from \cite{MR812939} and \cite{MR1763158} but we need
to take into consideration the special role of the point $(0,0)$ which is a singular point for some
of the $q_j$.
\begin{defn}
  We say that a probability measure $\nu$ on $\R_+  \times \R_+ \setminus \{(0,0)\}$ belongs to $\Proba0$ when
  $\nu(\{0,\, 0\})=0$.
\end{defn}  
We will show that 1) for any sequence of initial distribution
$\nu^N$ converging weakly to $\nu$ with $\nu\in \Proba0$ then $\P_{Y^N, \, \nu^N}$ converges
weakly to $\P_{\psi,\, \nu}$, 2) that for any probability measure
$\nu\in \Proba0$, $\P_{\psi(t),\, \nu}$ converges weakly to
$\delta_{\psi^\infty}$, 3) that the sequence $(Y^N(\infty), \,
N\ge 1)$ is tight and 4) that any possible accumulation point of  $(Y^N(\infty), \,
N\ge 1)$ belongs to $\Proba0$.

The proof is then short and elegant: since $(Y^N(\infty),\, n\ge 1)$
is tight, it is sufficient to prove that there is a unique possible
limit to any convergent sub-sequence of $(Y^N(\infty))$. We still
denote by $Y^N(\infty)$ such a converging sub-sequence (as $N$ goes to
infinity). Its limit is denoted by $\nu,$ known to belong to $\Proba0$. According to Point 1. above,
$\P_{Y^N, \, \P_{Y^N(\infty)}}$ converges weakly to $\P_{\psi,\,
  \nu}$. Moreover by the properties of Markov processes, $\P_{Y^N, \,
  \P_{Y^N(\infty)}}$ is the distribution of a stationary process,
hence $\psi$ is also a stationary process when started from
$\nu$. This means that the distribution of $\psi(t)$ is $\nu$ for any
$t$. Then, by Point 2. above, $\nu=\delta_{\psi^\infty}.$ We have
thus proved that any convergent sub-sequence of $Y^N(\infty)$
converges to $\delta_{\psi^\infty},$ hence the result. We now turn to
the proof of the three necessary lemmas. 
\begin{theorem}
  For any sequence of initial distribution $\nu^N$ converging weakly to
  $\nu\in \Proba0$, then $\P_{Y^N, \, \nu^N}$ converges weakly to
  $\P_{\psi,\, \nu}$.
\end{theorem}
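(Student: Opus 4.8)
The plan is to transfer the quantitative bound of \lemref{lem:control-ci}, which was established for a \emph{deterministic} starting point, to the present setting of random, weakly convergent initial data by means of a coupling. First I would invoke Skorokhod's representation theorem: since $\nu^N\to\nu$ weakly, there is a probability space carrying random vectors $\zeta^N\sim\nu^N$ and $\zeta\sim\nu$ with $\zeta^N\to\zeta$ almost surely, and, because $\nu\in\Proba0$, one has $\zeta\neq(0,0)$ a.s., hence $\|\zeta^N\|\to\|\zeta\|>0$ a.s. On this space I would realize, for each $N$, the Markov process $X^N$ of Section~\ref{sec:mean-field-appr} started from $X^N(0)=N\zeta^N$ and driven by Poisson noise independent of $(\zeta^M)_{M}$, so that $Y^N:=N^{-1}X^N$ has law $\P_{Y^N,\nu^N}$; writing $\mathcal G=\sigma(X^M(0),\,M\in\N)$, the limit $\zeta=\lim_{M\to\infty}M^{-1}X^M(0)$ is $\mathcal G$-measurable.

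Next I would apply \lemref{lem:control-ci} conditionally on $\mathcal G$ with $x^0=\zeta$ (legitimate, since $\zeta$ is $\mathcal G$-measurable and may be frozen under $\esp{\cdot\mid\mathcal G}$): with $Z_N^T:=\sup_{t\le T}\|Y^N(t)-\Psi(\zeta,t)\|$,
\begin{multline*}
  \esp{Z_N^T\mid\mathcal G}\le\left(\|N^{-1}X^N(0)-\zeta\|^2+\frac1N\Bigl(T+\frac{T^2}{N}\|X^N(0)\|\Bigr)\right)\\
  \times\exp\left(T\int_0^T\Bigl(1+\frac{1}{\|\Psi(\zeta,s)\|}\Bigr)^2\d s\right).
\end{multline*}
By \thmref{thm:asymptotique_syst_diff}, on $\{\zeta\neq(0,0)\}$ we have $m:=\inf_{s\ge0}\|\Psi(\zeta,s)\|>0$, so the exponential factor is bounded a.s.\ by $\exp\bigl(T^2(1+1/m)^2\bigr)<\infty$; moreover $N^{-1}X^N(0)=\zeta^N\to\zeta$ and $N^{-1}\|X^N(0)\|=\|\zeta^N\|\to\|\zeta\|<\infty$ a.s. Hence the right-hand side tends to $0$ almost surely, that is $\esp{Z_N^T\mid\mathcal G}\to0$ a.s.

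From this I would conclude as follows. For each $\varepsilon>0$ the conditional Markov inequality gives $\P(Z_N^T>\varepsilon\mid\mathcal G)\le\varepsilon^{-1}\esp{Z_N^T\mid\mathcal G}\to0$ a.s., and these quantities are bounded by $1$, so dominated convergence yields $\P(Z_N^T>\varepsilon)\to0$; thus $Z_N^T\to0$ in probability for every $T$. Since $t\mapsto\Psi(\zeta,t)$ is continuous, uniform convergence on $[0,T]$ entails convergence in the Skorokhod space $\D([0,T],\R^2)$, so $Y^N\to\Psi(\zeta,\cdot)$ in probability in $\D([0,T],\R^2)$, hence in distribution. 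As the law of $(\Psi(\zeta,t))_{t\ge0}$ is $\P_{\psi,\nu}$ by construction, $\P_{Y^N,\nu^N}$ converges weakly to $\P_{\psi,\nu}$ on $\D([0,T],\R^2)$ for every $T>0$, and therefore on $\D([0,\infty),\R^2)$ by the usual consistency argument.

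The main obstacle is the degeneracy of the prefactor in \lemref{lem:control-ci}, which explodes as $\inf_s\|\psi(x^0,s)\|\to0$ and is thus worthless near the singular point $(0,0)$ of the rates $q_j$; the hypothesis $\nu\in\Proba0$ is precisely what forces the limiting initial position $\zeta$ to avoid $(0,0)$ almost surely, so that this infimum is a.s.\ positive. The remaining subtlety is that the resulting bound is then only almost surely finite rather than uniformly bounded in $\omega$, which is why one cannot work directly in $\L^1$ and must instead condition on the initial data and argue through convergence in probability, as above.
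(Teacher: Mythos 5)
Your proof is correct, but it takes a genuinely different route from the paper's. The paper argues at the level of laws: it first proves tightness of $\P_{Y^N,\nu^N}$ in $\D([0,T],\R^2)$ via the semimartingale decomposition $Y^N=A^N+N^{-1}M^N$, a Burkholder--Davis--Gundy bound on the martingale part and a modulus-of-continuity bound on $A^N$, and then identifies the finite-dimensional distributions by comparing characteristic functions, truncating the initial condition to a compact set $K\subset\R_+\times\R_+\setminus\{(0,0)\}$ carrying all but $\e$ of the mass of every $\nu^N$, so that \lemref{lem:control-ci} applies with an exponential factor bounded \emph{uniformly} over $K$; the remaining error is handled by the continuity of $(x,s)\mapsto\Psi(x,s)$ and the weak convergence $\nu^N\to\nu$. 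You instead build a coupling via Skorokhod representation and obtain convergence of $Y^N$ to $\Psi(\zeta,\cdot)$ in probability for the uniform norm, which makes a separate tightness step unnecessary (convergence in probability in a metric space implies convergence in law) and replaces the uniform-over-$K$ control of the exponential factor by its almost sure finiteness on $\{\zeta\neq(0,0)\}$, a set of full measure precisely because $\nu\in\Proba0$. Both arguments rest on the same two ingredients, namely \lemref{lem:control-ci} and the fact from \thmref{thm:asymptotique_syst_diff} that $\inf_{s}\|\Psi(x^0,s)\|>0$ for $x^0\neq(0,0)$, and your conditional-Markov-plus-dominated-convergence step is the right way to pass from an almost surely vanishing conditional bound to convergence in probability. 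The only point worth spelling out more carefully is the freezing argument that lets you substitute the ${\mathcal G}$-measurable $\zeta$ for the deterministic $x^0$ in the lemma (conditionally on ${\mathcal G}$, the law of $Y^N$ is a kernel in $X^N(0)$ independent of the rest of ${\mathcal G}$, so the deterministic bound may be evaluated pathwise), which you do flag. Your route is shorter and more probabilistic; the paper's stays at the level of distributions, avoids constructing any coupling, and yields tightness and continuity of the limit as separate, reusable statements.
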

\begin{proof}
  We will proceed in two steps: First prove the tightness in $\D([0,\,
  T],\, \R^2)$ and then identify the limit. Actually, we will prove
  the slightly stronger result that $\P_{Y^N, \, \nu^N}$ is tight and
  that the limiting process is continuous.  According to
  \cite{MR1700749}, we need to show that for each positive $\epsilon$
  and $\eta$, there exists $\delta>0$ and $n_0$ such that for any
  $N\ge n_0$,
  \begin{equation*}
    \P\left( \sup_{\substack{|v-u|\le\delta\\v,u\le T}}\|Y^N(v)-Y^N(u)\|\ge
      \epsilon\right)\le \eta.
  \end{equation*}
  We denote by
  \begin{align*}
    A^N_1(t)&=\frac{1}{N}\int_0^t (q_1^N+q_3^N-q_2^N)(X^N(s))\d s\\
    A^N_2(t)&=\frac{1}{N}\int_0^t (q_4^N-q_3^N-q_5^N)(X^N(s))\d s.
  \end{align*}
  From Theorem \ref{thm:martingale problem}, we know that
  \begin{equation*}
    Y^N_i(v)=A^N_i(v)+\frac{1}{N}M^N_i(v),\ i=1,\,2.
  \end{equation*}
  Hence, for any positive $a$,
  \begin{multline}
    \label{eq:11}
    \P\left( \sup_{\substack{|v-u|\le\delta\\ v,u\le
          T}}\|Y^N(v)-Y^N(u)\|\ge
      \epsilon\right) \le \P(\|Y^N(0)\|\ge a)\\
    \begin{aligned}
      &+\P( \sup_{\substack{|v-u|\le\delta\\ v,u\le
          T}}\|A^N(v)-A^N(u)\|\ge
      \epsilon/2;\ \|Y^N(0)\|\le a)\\
      &+\P( \sup_{\substack{|v-u|\le\delta\\ v,u\le
          T}}\frac{1}{N}\|M^N(v)-M^N(u)\|\ge \epsilon/2;\ \|Y^N(0)\|\le
      a).
    \end{aligned}
  \end{multline}
  Eqn. (\ref{eq:10}) implies that
  \begin{equation*}
    \esp{\sup_{s\le T}\frac{1}{N^2}\|M^N(s)\|^2\,\biggl |\,\|Y^N(0)\|\le a }\le \frac{C(a+1)}{N}.
  \end{equation*}
  This means that $(N^{-1}M^N,\, N\ge 1)$ converges to $0$ in
  $L^2(\Omega; \D([0,T],\R^2),\, \P_{.|\|Y^N(0)\|\le a})$. Hence it
  converges in distribution in $\D([0,T],\R^2)$ and thus it is
  tight. This means that the last summand of (\ref{eq:11}) can be made
  as small as needed for large $N$. Furthermore,
  \begin{align*}
    \|A^N(v)-A^N(u)\|&\le \frac{2}{N}\int_u^v \sum_{i=1}^5 q_i(X^N(s))ds\\
    &\le 2|v-u|\left(\frac{r_N+\lambda_N}{N}+\frac CN \sup_{s\le T}
      \|X^N(s)\|\right).
  \end{align*}
  It follows from Lemma \ref{lem:borne_sup} that
  \begin{align*}
    \esp{\sup_{\substack{|v-u|\le\delta,\\ v,u\le T}}\|A^N(v)-A^N(u)\|\,\biggl |\,\|Y^N(0)\|\le
      a }&\le C\delta \,(\frac{r_N+\lambda_N}{N}T+a)\\
&\le C ((r+\lambda)T+a)\delta.
  \end{align*}
  This means that the second summand of (\ref{eq:11}) can also be made
  as small as wanted. The hypothesis on the initial condition exactly
  means that this also holds for the first summand of
  (\ref{eq:11}). Thus we have proved so far that $\P_{Y^N, \, \nu^N}$
  is tight and that its limit belongs to the space of continuous
  functions.

  We now prove that the only possible limit is $\P_{\psi,\,\nu}$. Assume that $\nu^N$ tends to $\nu$ and that $\P_{Y^N,\,
    \nu^N}$ tends to some $\P_{Z,\, \nu}$. We suppose that the initial conditions  $X^N(0)$ of the Markov processes are distributed as $\nu_N$
    and we introduce a random variable $x^0$ distributed as  $\nu.$ Recall that $Y^N=N^{-1} X^N.$ We fix  $M \in {\mathbf N}^*,$ $(\alpha^k=(\alpha_1^k, \alpha^k_2))_{0\leq k\leq M}\in {\mathbf R}^{2M+2}$ and $ 0=t_0 \leq t_1 \leq ... \leq t_M$. We introduce 
    \begin{align*}
    G^N&= {\mathbf E} \left( \exp i \left[\sum_{k=0}^M < \alpha_k, Y^N(t_{k})> \right] \right),\\
    \tilde{G}^N&={\mathbf E} \left( \exp i \left[\sum_{k=0}^M < \alpha_k, \psi(\frac{X^N(0)}{N},t_{k})> \right] \right),\\
     G&= {\mathbf E} \left( \exp i \left[\sum_{k=0}^M < \alpha_k, \psi(x^0,t_{k})> \right] \right),
    \end{align*}
    where $Y^N(-1)=0,$ and $Y^N=N^{-1} X^N,$ with initial condition $X^N(0)$ distributed as $\nu_N$
    and $X^0$ as $\nu.$\\
    
    Let $\varepsilon>0$. The sequence $(\nu_N)_{N \in {\mathbf N}}$ is tight, hence there exits  a compact set $K\subset {\R}_+ \times {\R}_+ \setminus \{(0,0)\}$ such that $\nu(K^c)+ \sup_N \nu_N( K^c) \leq \varepsilon$.
    We also introduce
    \begin{align*}
    G^N_K&= {\mathbf E}
     \left( \exp i
      \left[\sum_{k=0}^M < \alpha_k, 
      Y^N(t_{k})> \right]
       {\mathbf 1}_K(\frac{X^N(0)}{N})\right),\\
\tilde{G}^N_K&={\mathbf E} \left( \exp i \left[\sum_{k=0}^M < \alpha_k, \psi(\frac{X^N(0)}{N},t_{k})> \right]
    {\mathbf 1}_{K}(\frac{X^N(0)}{N} )\right),\\
     G_K&= {\mathbf E} \left( \exp i \left[\sum_{k=0}^M < \alpha_k, \psi(x^0,t_{k})> \right]{\mathbf 1}_K(x^0) \right).
    \end{align*}
    Then, \begin{align*}
    \limsup_N \left| G - G^N\right| \leq 2 \varepsilon + \limsup_N|\tilde{G}^N_K - G_K^N|+ \limsup_N|\tilde{G}^N_K - G_K^N|.
    \end{align*}

    From Theorem \ref{thm:asymptotique_syst_diff}, the map $(x,s) \mapsto \psi(x,s)$ is continuous on $({\R}_+ \times {\R}_+ \setminus \{(0,0)\}) \times [0,T]$ and 
    $\inf_{ (x,s) \in K \times [0,T]} \| \psi(x,s) \|>0.$
    Since $\frac{ X^N}{N}(0)$ takes is value in the  compact set $K$,
    then from Lemma \ref{lem:control-ci}, $\limsup_N|\tilde{G}^N_K - G_K^N|=0.$
    Since the sequence of measures $(\nu_N)$ converges weakly to $\nu,$ then $\limsup_N|\tilde{G}^N_K - G_K^N|=0$. Hence,
\begin{align*}
\limsup_N \left| G - G^N\right| \leq 2 \varepsilon 
\end{align*}
 for all $\varepsilon >0.$

 That means for any $t_0,\cdots, t_M$,
  \begin{equation*}
    {\mathbf P}_{(Y^N(t_0 ),\cdots,Y^N(t_M)),\nu_N }\xrightarrow{N\to 0} \P_{(\psi(x^0,t_0),\cdots,\psi(x^0,t_N)),\nu},
  \end{equation*}
  Hence all the accumulation points are the same and the convergence
  of $\P_{Y^N,\nu^N}$ follows.
\end{proof}
\begin{theorem}
  For any probability measure $\nu\in \Proba0$, $\P_{\psi(t),\, \nu}$
  converges weakly to $\delta_{\psi^\infty}$ as $t\to \infty.$
\end{theorem}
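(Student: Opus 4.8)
The plan is to deduce the statement directly from the deterministic convergence already established in Theorem~\ref{thm:asymptotique_syst_diff}. Recall that, by definition, $\P_{\psi(t),\,\nu}$ is the law of $\Psi(x^0,t)$ where $x^0$ has law $\nu$; since $\nu\in\Proba0$ means $\nu(\{(0,0)\})=0$, the random variable $x^0$ takes its values in $\R_+\times\R_+\setminus\{(0,0)\}$ with probability one, and the map $x\mapsto\Psi(x,t)$ is measurable for each $t$ (this is part of Theorem~\ref{thm:asymptotique_syst_diff}), so $\Psi(x^0,t)$ is a well-defined random variable and all the expectations below make sense.

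First I would note that, for $\nu$-almost every $x^0$ (namely every $x^0\neq(0,0)$), Theorem~\ref{thm:asymptotique_syst_diff} gives $\Psi(x^0,t)\to\psi^\infty$ as $t\to+\infty$. Viewing $t\mapsto\Psi(\cdot,t)$ as a family of random variables on the probability space $(\R_+\times\R_+\setminus\{(0,0)\},\,\nu)$, this is exactly almost-sure convergence to the constant $\psi^\infty$, and almost-sure convergence implies convergence in distribution, which is the claim. Concretely, for any bounded continuous $f\colon\R^2\to\R$ one has $f(\Psi(x^0,t))\to f(\psi^\infty)$ for $\nu$-a.e. $x^0$ by continuity of $f$, and $|f(\Psi(x^0,t))|\le\|f\|_\infty$ with $\|f\|_\infty$ being $\nu$-integrable since $\nu$ is a probability measure; dominated convergence then yields $\esp{f(\Psi(x^0,t))}\to f(\psi^\infty)$, i.e. $\P_{\psi(t),\,\nu}$ converges weakly to $\delta_{\psi^\infty}$.

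There is no genuinely hard step; the only point requiring care is the meaning of $\psi^\infty$ in the degenerate regime $r=0$, $\rho>0$, where the limit of $\Psi(x^0,t)$ equals $(\xi_1,\xi_2)$ or $(0,\lambda/\mu_2)$ according as $x_1^0>0$ or $x_1^0=0$. In that situation one must additionally require that $\nu$ does not charge the invariant axis $\{x_1^0=0\}$ (otherwise the weak limit of $\P_{\psi(t),\,\nu}$ is a nontrivial mixture of the two Dirac masses rather than $\delta_{\psi^\infty}$); in the cases $r>0$ and $r=0,\ \rho\le 0$ the limit point is the same for every admissible $x^0$ and the argument above applies verbatim.
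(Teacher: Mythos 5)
Your proof is correct and takes essentially the same route as the paper: both write $\int f\,d\P_{\psi(t),\,\nu}$ as the $\nu$-integral of $f(\Psi(x,t))$, invoke Theorem~\ref{thm:asymptotique_syst_diff} for the pointwise convergence $\Psi(x,t)\to\psi^\infty$ for $\nu$-a.e.\ $x$, and conclude by dominated convergence. Your closing caveat about the regime $r=0$, $\rho>0$ --- where the limit point depends on whether $x_1^0>0$ and membership in $\Proba0$ alone does not force a single Dirac limit --- is a genuine point that the paper's own proof silently passes over.
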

\begin{proof}
  For any $f$ continuous bounded on $\R^2$, we have
  \begin{equation*}
    \int f d\P_{\psi(t),\, \nu}=\int_{\R^2}\esp{f(\psi(t))\,|\, \psi(0)=x}\d\nu(x).
  \end{equation*}
  Theorem \ref{thm:asymptotique_syst_diff} says that for any $x\in
  \R_+\times\R_+\setminus\{(0,0)\}$,
  \begin{equation*}
    \esp{f(\psi(t))\,|\, \psi(0)=x} \xrightarrow{t\to \infty} f(\psi^\infty).
  \end{equation*}
  The result follows by dominated convergence.
\end{proof}
\begin{theorem}
  \label{thm:tightness}
  The sequence $(Y^N(\infty), \, N\ge 1)$ is tight and any
  accumulation point belongs to $\Proba0$.
\end{theorem}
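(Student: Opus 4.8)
The statement has two parts: tightness of the sequence of stationary laws $(Y^N(\infty))$, and the fact that every accumulation point assigns no mass to the singular point $(0,0)$. For the first part, the natural route is to use the Lyapunov-type estimate already established. Recall from the proof of Theorem~\ref{thm:ergodic} that $QS(n_1,n_2)=\lambda+r-\mu_1n_1-\mu_2n_2$, where $S(n_1,n_2)=n_1+n_2$; for the $N$-th system the scaled generator satisfies $Q^NS(n_1,n_2)=r_N+\lambda_N-\mu_1n_1-\mu_2n_2$. Integrating this relation against the stationary measure of $X^N$ (Dynkin / the stationary balance equation $\esp{Q^NS(X^N(\infty))}=0$, valid since the relevant quantities are integrable by Lemma~\ref{lem:borne_sup}) gives $\mu_1\esp{X_1^N(\infty)}+\mu_2\esp{X_2^N(\infty)}=r_N+\lambda_N$, hence $\esp{\|Y^N(\infty)\|}\le (r_N/N+\lambda_N/N)/\mu_-$ where $\mu_-=\mu_1\wedge\mu_2$. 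Since $r_N/N\to r$ and $\lambda_N/N\to\lambda$, this bound is uniform in $N$, and Markov's inequality yields tightness of $(Y^N(\infty))$ on $\R_+\times\R_+$.

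For the second part, I would argue by contradiction against mass escaping to $(0,0)$. Suppose along a subsequence $Y^N(\infty)$ converges weakly to some $\nu$ with $\nu(\{(0,0)\})=c>0$. The idea is that $(0,0)$ cannot hold mass in the stationary regime because the drift of the first coordinate is strictly positive near the origin when $r>0$: indeed $q_1^N\ge r_N$ always, so there is a steady influx of antibody-positive individuals and the process is pushed away from $n_1=0$. Quantitatively, I would look at a test function penalizing small values of $X_1^N$, for instance $G(n_1,n_2)=e^{-\beta n_1}$ for a suitable $\beta>0$, or more simply $G(n_1)=(1+n_1)^{-1}$, and compute $Q^NG$. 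One finds $Q^NG(n_1,n_2)=q_1^N(n_1,n_2)\,(G(n_1+1)-G(n_1))+(q_2^N+q_3^N)(n_1,n_2)\,(\text{terms in }G(n_1\mp1)-G(n_1))$; near $n_1=0$ the first term is bounded below in absolute value by a positive constant times $r_N$ (since $q_1^N\ge r_N$ and $G(1)-G(0)\le -\tfrac12$), while the destructive terms are controlled because $q_2^N,q_3^N$ vanish or are small there. Taking expectations under the stationary law gives $\esp{Q^NG(X^N(\infty))}=0$, and splitting the state space into $\{n_1=0\}$ and $\{n_1\ge1\}$ produces a bound of the form $r_N\,\P(X_1^N(\infty)=0)\le C\,\esp{(q_2^N+q_3^N+\text{lower order})\car_{n_1\ge 1}}$; dividing through and using the moment bound from the first part shows $\P(X_1^N(\infty)=0)\to 0$, and more generally $\P(\|Y^N(\infty)\|\le\eta)$ is small for small $\eta$ uniformly in $N$, which rules out atoms at $(0,0)$ in any limit. (When $r=0$ the situation is genuinely different — the axis is absorbing — but the theorem as used in the surrounding argument is invoked for the ergodic/irreducible case; if $r=0$ must be covered, the relevant statement is that the interior fixed point still attracts, and one would instead invoke the instability of $(0,\lambda/\mu_2)$ established in Theorem~\ref{thm:asymptotique_syst_diff}.)

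The main obstacle is the second half: getting a clean, $N$-uniform lower bound showing that stationary mass does not concentrate near the singular point. The difficulty is that the rates $q_1^N,\dots,q_5^N$ are themselves of order $N$, so one must scale carefully and make sure the penalty function $G$ has increments that interact with the $O(N)$ rates to leave an $O(1)$ inequality after dividing by $N$; one also has to handle the nonlinear factor $n_1/(n_1+n_2)$, which is the source of the singularity, by observing that it is bounded by $1$ and that near $n_1=0$ with $n_2$ not too small it is in fact small, so the outflow terms $q_2^N,q_3^N$ do not overwhelm the inflow $r_N$. A convenient way to organize this is to work with $G(n_1,n_2)=\exp(-\beta n_1)$ so that all increments are uniformly bounded and the computation reduces to comparing $r_N(1-e^{-\beta})$ against $(\mu_1 n_1+\alpha p\, n_2\tfrac{n_1}{n_1+n_2})(e^{\beta}-1)e^{-\beta n_1}$, the latter being small on $\{n_1\le K\}$ after choosing $\beta$ small; combined with the first-moment bound controlling the contribution of $\{n_1> K\}$, this delivers the desired uniform non-concentration and hence $\nu\in\Proba0$ for every accumulation point $\nu$.
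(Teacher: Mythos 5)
Your proposal splits the statement the same way we do but diverges on both halves. For tightness, your route --- the stationary balance for $S(n_1,n_2)=n_1+n_2$, giving $\mu_-\,\esp{\|X^N(\infty)\|}\le r_N+\lambda_N$ and then Markov's inequality --- is correct and more elementary than ours: we dominate $\|X^N\|$ by an M/M/$\infty$ queue and use the exponential supermartingale $h_c$ of Lemma \ref{lem:supermartingale} to get the Chernoff-type bound $\P(\|Y^N(\infty)\|>K)\le \exp(-\frac12 NK\ln K)$. Your first-moment bound is all that tightness requires; the only point to repair is the justification of $\esp{Q^NS(X^N(\infty))}=0$, since Lemma \ref{lem:borne_sup} controls the transient process, not integrability under the stationary law. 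Either invoke the Foster--Lyapunov inequality already used in the proof of Theorem \ref{thm:ergodic} (which yields $\pi(\|x\|)<\infty$ with the same quantitative bound), or integrate Dynkin's formula over $[0,t]$, divide by $t$ and let $t\to\infty$.

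The second half is where the genuine gap lies. The theorem only asks that no limiting mass sits at the single point $(0,0)$, i.e.\ that the \emph{total} population $\|Y^N(\infty)\|$ does not concentrate near $0$. You instead set out to show that the \emph{first coordinate} does not concentrate near $0$, driven by the inflow $r_N$ alone. That is a strictly stronger statement, it forces you to fight the nonlinear factor $n_1/(n_1+n_2)$, and it is false when $r=0$: the stationary law then lives on the closed set $\{n_1=0\}$, yet the theorem still holds and is still needed there (the limit $\delta_{(0,\lambda/\mu_2)}$ belongs to $\Proba0$), so the hedge in your parenthesis does not close the case. The fix is to apply your own exponential test function to the right variable: take $G(x)=e^{-\theta\|x\|}$ with $\|x\|=n_1+n_2$. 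The infection transition $q_3^N$ preserves $\|x\|$ and drops out entirely, and stationarity gives
\begin{equation*}
(r_N+\lambda_N)(1-e^{-\theta})\,\esp{e^{-\theta\|X^N(\infty)\|}}=(e^{\theta}-1)\,\esp{e^{-\theta\|X^N(\infty)\|}\bigl(\mu_1X_1^N(\infty)+\mu_2X_2^N(\infty)\bigr)}.
\end{equation*}
Since $u e^{-\theta u}\le(\theta e)^{-1}$, the choice $\theta=\gamma/N$ yields $\limsup_N\esp{e^{-\gamma\|Y^N(\infty)\|}}\le \max(\mu_1,\mu_2)/(e\gamma(r+\lambda))$, hence $\nu(\{(0,0)\})\le\inf_{\gamma>0}\max(\mu_1,\mu_2)/(e\gamma(r+\lambda))=0$ for any accumulation point $\nu$, uniformly in $r\ge0$ because the relevant inflow is $r+\lambda>0$. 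This is exactly what our proof does (with $\theta=1$). Your $e^{-\beta n_1}$ computation can be salvaged for $r>0$ with the $\beta\sim\gamma/N$ scaling you anticipate, but it buys nothing beyond the statement and loses the case $r=0$.
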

We need a preliminary lemma which relies on the observation that when
$\mu_1=\mu_2$, the process $X_1+X_2$ has the dynamics of the process
counting the number of customers in an M/M/$\infty$ queue. Recall that
$\mu_-=\min(\mu_1,\, \mu_2)$ and set  $\zeta=(r+\lambda p_I)/\mu_-$. For any
$c\in \R_+,$ any $x\in \N$, define the function
\begin{equation*}
  h_c(t,\, x)=(1+ce^{\mu_- t})^xe^{-\zeta c\exp(\mu_- t)}.
\end{equation*}
Note that $h_c$ is increasing with respect to $x$. Moreover, according
to \cite[Chapter 6]{MR1996883},
\begin{equation}\label{eq:9}
  \frac{\partial h_c}{\partial t}(t,\, x)+R(h_c(t,\, .))(x)=0,
\end{equation}
where, for any $w\, :\, \N\to \R,$
\begin{equation*}
  Rw(x)=(w(x+1)-w(x))(r+\lambda p_I)+(w(x-1)-w(x))\mu_-x.
\end{equation*}
\begin{lemma}
  \label{lem:supermartingale}
  For any non negative real $c$, the process $H_c=(h_c(t,X_1(t)+X_2(t)), \, t\ge
  0)$ is a positive supermartingale.
\end{lemma}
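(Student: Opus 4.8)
The plan is to realise $H_c$ as a nonnegative local supermartingale through a time-dependent version of Dynkin's lemma, the sign of the compensator being forced by the space-time harmonicity relation~(\ref{eq:9}) together with the monotonicity of $h_c$ in its second argument, and then to upgrade ``local supermartingale'' to ``supermartingale''. Positivity of $H_c$ is immediate since $h_c(t,x)=(1+ce^{\mu_- t})^x e^{-\zeta c\exp(\mu_- t)}>0$. For the supermartingale property I would apply Dynkin's lemma to the space-time process $\widehat X(t)=(t,X(t))$, a non-explosive pure-jump Markov process (non-explosion being guaranteed by Lemma~\ref{lem:borne_sup}) with generator $\widehat Q=\partial_t+Q$ where $Q$ is the generator of $X$, using the function $g(t,n_1,n_2)=h_c(t,n_1+n_2)$, which is $C^1$ in $t$. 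With the usual localization (stop at $\tau_n=\inf\{t:X_1(t)+X_2(t)\ge n\}$, on which both the path and $g$ are bounded, $\tau_n\uparrow\infty$ a.s.\ since $X$ does not explode), this gives that
\begin{equation*}
  H_c(t)-H_c(0)-\int_0^t\Bigl[\frac{\partial h_c}{\partial s}\bigl(s,X_1(s)+X_2(s)\bigr)+(Qg_s)\bigl(X(s)\bigr)\Bigr]\d s
\end{equation*}
is a local martingale, where $g_s(n_1,n_2)=h_c(s,n_1+n_2)$.

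It then remains to check that the integrand is nonpositive. Among the five transitions of $X$, those of rates $q_1$ and $q_4$ raise $n_1+n_2$ by one, those of rates $q_2$ and $q_5$ lower it by one, and the one of rate $q_3$ leaves $n_1+n_2$ unchanged; hence, writing $S=n_1+n_2$ and $\phi=h_c(s,\cdot)$,
\begin{equation*}
  (Qg_s)(n_1,n_2)=(q_1+q_4)(n_1,n_2)\bigl(\phi(S+1)-\phi(S)\bigr)+(q_2+q_5)(n_1,n_2)\bigl(\phi(S-1)-\phi(S)\bigr).
\end{equation*}
Now $q_1+q_4\equiv r+\lambda$ (the two occurrences of $\lambda p_I\,n_1/(n_1+n_2)$ cancel), which is exactly the input rate in the operator $R$, and $q_2+q_5=\mu_1 n_1+\mu_2 n_2\ge\mu_-(n_1+n_2)=\mu_- S$. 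By~(\ref{eq:9}), $\frac{\partial h_c}{\partial s}(s,S)=-R\bigl(h_c(s,\cdot)\bigr)(S)=-(r+\lambda)\bigl(\phi(S+1)-\phi(S)\bigr)-\mu_- S\bigl(\phi(S-1)-\phi(S)\bigr)$, so adding this to $(Qg_s)(n_1,n_2)$ the integrand becomes
\begin{equation*}
  \bigl(\mu_1 n_1+\mu_2 n_2-\mu_-(n_1+n_2)\bigr)\,\bigl(\phi(S-1)-\phi(S)\bigr)\ \le\ 0,
\end{equation*}
the first factor being $\ge 0$ and the second $\le 0$ because $x\mapsto h_c(s,x)$ is nondecreasing (its base $1+ce^{\mu_- s}$ is $\ge 1$). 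Therefore $H_c$ is a local supermartingale.

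To pass to a genuine supermartingale, write $H_c=L-A$ with $A$ increasing, $A_0=0$, and $L=H_c+A$; since $H_c\ge 0$ and $A\ge 0$, $L$ is a nonnegative local martingale, hence a supermartingale, so $\esp{L_t}\le\esp{L_0}=\esp{H_c(0)}<\infty$ --- the finiteness holding because $X(0)$ has exponential moments (it is deterministic in the setting of Lemma~\ref{lem:borne_sup}, and in the application to the stationary regime $X_1(0)+X_2(0)$ is stochastically dominated by a Poisson variable, again by Lemma~\ref{lem:borne_sup}). Consequently $L_t,A_t,H_c(t)$ are all integrable, and for $s\le t$, using that $L$ is a supermartingale and $A$ is increasing, $\esp{H_c(t)\/\F_s}=\esp{L_t\/\F_s}-\esp{A_t\/\F_s}\le L_s-A_s=H_c(s)$, which is the claim.

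The only step that deserves real care is the justification that $H_c$ is a \emph{local} supermartingale --- i.e.\ the time-dependent Dynkin formula --- given that $h_c$ is unbounded in both of its arguments; the localization above, licit because $X$ does not explode (Lemma~\ref{lem:borne_sup}), handles it, and the remaining generator computation, the sign check, and the local-to-true-supermartingale passage are routine.
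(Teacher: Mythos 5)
Your proof follows the same route as the paper's: compare the generator of $\|X\|=X_1+X_2$ with the birth--death operator $R$, use Eqn.~(\ref{eq:9}) to absorb the time derivative, and conclude from the monotonicity of $x\mapsto h_c(t,x)$ together with $\mu_1n_1+\mu_2n_2\ge\mu_-(n_1+n_2)$. Two remarks. First, you are more careful than the paper on two technical points it passes over in silence: the localization needed to apply Dynkin's formula to the unbounded function $h_c$, and the passage from local supermartingale to genuine supermartingale via the decomposition $H_c=L-A$ with $L$ a nonnegative local martingale; both additions are welcome. Second --- and this is the one place where your write-up does not match the paper as printed --- you assert that $q_1+q_4\equiv r+\lambda$ ``is exactly the input rate in the operator $R$'', whereas the paper defines $R$ (and $\zeta$) with input rate $r+\lambda p_I$. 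With that literal definition your cancellation leaves a residual term $\lambda(1-p_I)\bigl(\phi(S+1)-\phi(S)\bigr)\ge 0$ and the sign argument breaks. The resolution is that the constant in the paper is off: the true upward jump rate of $\|X\|$ is $r+\lambda$, exactly as you compute; the paper's own application of Dynkin's formula also writes $r+\lambda p_I$ where the generator yields $r+\lambda$; and the lemma holds once $R$ and $\zeta$ are read with $r+\lambda$ in place of $r+\lambda p_I$ (a change that is harmless in the subsequent tightness proof, where only the finiteness of $\zeta$ matters). You should make that replacement explicit rather than asserting a match with the printed $R$; with it, your proof is complete and correct.
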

\begin{proof}
  According to Dynkin
  formula (see \cite[Proposition C.5]{MR1996883}), for any $0\le s <
  t$, we have
  \begin{multline*}
    0={\mathbf E}\left[\vphantom{\int_s^t \frac{\partial h_c}{\partial r}}h_c(t,\|X(t)\|)-h_c(s,\|X(s)\|) -\int_s^t \frac{\partial h_c}{\partial t}(r,\, \|X(r)\|)\right.\\
    \begin{aligned}
      &-(r+\lambda p_I)\int_s^t \Bigl(h_c(r,\|X(r)\|+1)-h_c(r,\|X(r)\|)\Bigr)\d r\\
      &\left.-\int_s^t \Bigl(h_c(r,\|X(r)\|-1)-h_c(r,\|X(r)\|)\Bigr)(\mu_1 X_1(r)+\mu_2X_2(r))\d r\, \biggl|\, {\mathcal F}_s\right]  \\
      \ge &{\mathbf E}\left[\vphantom{\int_s^t \frac{\partial h_c}{\partial r}}h_c(t,\|X(t)\|)-h_c(s,\|X(s)\|)-\int_s^t \frac{\partial h_c}{\partial t}(r,\, \|X(r)\|)\right.\\
      &-(r+\lambda p_I)\int_s^t \Bigl(h_c(r,\|X(r)\|+1)-h_c(r,\|X(r)\|)\Bigr)\d r\\
      &\left.-\int_s^t \Bigl(h_c(r,\|X(r)\|-1)-h_c(r,\|X(r)\|)\Bigr)\mu_-(
        X_1(r)+X_2(r))\d r\,\biggl |\, {\mathcal F}_s\right],
    \end{aligned}
  \end{multline*}
  where the inequality follows from the monotony of $h_c$ and the
  definition of $\mu_-$. Hence we get that
  \begin{multline*}
    0\ge {\mathbf E}\left[\vphantom{\int_s^t \frac{\partial h_c}{\partial r}}h_c(t,\|X(t)\|)-h_c(s,\|X(s)\|)\right.\\
    \left.-\int_s^t \frac{\partial h_c}{\partial t}(r,\,
      \|X(r)\|)+R(h_c(r,.))(\|X(r)\|)\d r\,\biggl|\, {\mathcal
        F}_s\right].
  \end{multline*}
  In view of Eqn. (\ref{eq:9}), we get
  \begin{equation*}
    0\ge \esp{h_c(t,\|X(t)\|)-h_c(s,\|X(s)\|)\,|\, {\mathcal F}_s},
  \end{equation*}
  i.e., $H_c$ is a supermartingale.

Now, let  $(Y^N(\infty),\, N \geq 1)$ be a subsequence which converge to $\nu.$
  Since $X^N(\infty)$ is a random variable distributed according to
  the stationary law of the process ${X^N}$,
\begin{align*}
\esp{ Qe^{- \| .\|}(X^N(\infty))}=0.
\end{align*}
By a direct calculation, we have
\begin{align*}
Qe^{- \|.\|}(x)= e^{-\|x\|}[(\lambda +r)(e^{-1 } -1) + (\mu_1 x_1 + \mu_2 x_2)(e -1)],
\end{align*}
then
\begin{multline*}
(\lambda_N +r_N)(1-e^{-1 })\esp{ e^{-N \|Y^N(\infty)\|}}\\
= N(e -1) \esp{ e^{-N \|Y^N(\infty)\|}(\mu_1 Y_1^N(\infty) + \mu_2 X_2^N(\infty))}.
\end{multline*}
Hence,
\begin{math}
(1- e^{-1})(r+ \lambda)\nu(\{(0,0)\})=0,
\end{math}
i.e., $\nu$ belongs to $\Proba0$.
\end{proof}
\begin{proof}[Proof of Theorem \protect~\ref{thm:tightness}]
  Let $K$ be real, for any positive real $\theta$, we have
  \begin{equation*}
    \P(\|Y^N(t)\|>K)=\P(\|X^N(t)\|>NK)\le e^{-\theta NK}\esp{\exp(\theta \|X^N(t)\|)}.
  \end{equation*}
  Lemma \ref{lem:supermartingale} entails that
  \begin{equation*}
    \esp{\exp(\theta \|X^N(t)\|)}\le (1+(e^\theta-1)e^{-\mu_-t})^{N\|X^N(0)\|}\exp\Bigl(N\zeta
    (e^\theta -1)(1-e^{-\mu_-t})\Bigr).
  \end{equation*}
  Hence,
  \begin{multline*}
    \P(\|Y^N(\infty)\|>K)=\lim_{t\to \infty}\P(\|Y^N(t)\|>K)\\
    \begin{aligned}
      &\le \inf_{\theta>0}\lim_{t\to
        \infty}\left(1+(e^\theta-1)e^{-\mu_-t}\right)^{N\|X^N(0)\|}
      \exp\Bigl(-\theta NK+ N\zeta
      (e^\theta -1)(1-e^{-\mu_-t})\Bigr)\\
      &=\inf_{\theta>0}  \exp\left(N(-\theta K+ \zeta(e^\theta -1)\right)\\
      &\le \exp(-\frac{1}{2}NK\ln K),
    \end{aligned}
  \end{multline*}
for $K$ large enough.
  The tightness follows.
\end{proof}
\section{Central Limit Theorem}
\label{sec:centr-limit-theor}
It turns out that we can also evaluate the order of the approximation
when we replace $X^N$ by $\psi$. This is given by CLT like theorem.
\begin{theorem}
  Assume that the hypothesis of Theorem \ref{thm:mean-field-appr}
  holds. Then, for any $T>0$, the process
  \begin{equation*}
    W^N=\sqrt{N}(Y^N-\psi)
  \end{equation*}
  tends in distribution in $\D([0,\, T], \, \R^2)$ to a centered
  Gaussian process with covariance matrix $\Gamma(t)$ given by:
  \begin{equation*}
    \Gamma(t)=\begin{pmatrix}
      \Gamma_1(t)  &  - \alpha p\displaystyle\int_0^t\dfrac{\psi_1(s)\psi_2(s)}{\psi_1(s)+\psi_2(s)}\d s\\
     - \alpha p\displaystyle\int_0^t \dfrac{\psi_1(s)\psi_2(s)}{\psi_1(s)+\psi_2(s)}\d s & \Gamma_2(t)
    \end{pmatrix},
  \end{equation*}
  where
  \begin{align*}
    \Gamma_1(t)&= rt + \displaystyle\int_0^t \lambda\,p_I\dfrac{\psi_1(s)}{\psi_1(s)+\psi_2(s)}+\mu_1\, \psi_1(s)+\alpha\, p     \,\dfrac{\psi_1(s)\psi_2(s)}{\psi_1(s)+\psi_2(s)}\d s\\
    \Gamma_2(t)&= \displaystyle\int_0^t
    \lambda(1-p_I\dfrac{\psi_1(s)}{\psi_1(s)+\psi_2(s)})+\mu_2\,
    \psi_2(s)+\alpha\, p
    \,\dfrac{\psi_1(s)\psi_2(s)}{\psi_1(s)+\psi_2(s)}\d s.
  \end{align*}
\end{theorem}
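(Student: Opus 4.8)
The plan is to run the classical diffusion‑approximation argument for density‑dependent jump Markov chains: the functional martingale central limit theorem applied to the noise, together with a linearisation of the drift around $\psi$, with care given to the fact that the rates $q_j^N$ are only locally Lipschitz on $\R_+^2\setminus\{(0,0)\}$. Write $F=(f_1,f_2)$ for the vector field of~(\ref{eq:1}) and, for $y\in\R_+^2\setminus\{(0,0)\}$, set $F^N(y)=\tfrac1N\bigl(q_1^N+q_3^N-q_2^N,\ q_4^N-q_3^N-q_5^N\bigr)(Ny)$; since the fractions $n_1/(n_1+n_2)$ are scale invariant, $\|F^N-F\|_\infty\le|r_N/N-r|+|\lambda_N/N-\lambda|$. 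By Theorem~\ref{thm:martingale problem},
\begin{multline*}
  W^N(t)=W^N(0)+\sqrt N\int_0^t(F^N-F)(Y^N(s))\d s\\
  +\sqrt N\int_0^t\bigl(F(Y^N(s))-F(\psi(s))\bigr)\d s+\frac{1}{\sqrt N}M^N(t).
\end{multline*}
By Theorem~\ref{thm:asymptotique_syst_diff}, $\psi([0,T])$ is a compact subset of $\R_+^2\setminus\{(0,0)\}$ on which $F$ is $C^2$; on the event $\{\sup_{s\le T}\|Y^N(s)-\psi(s)\|\le\delta\}$, which by Theorem~\ref{thm:mean-field-appr} has probability tending to $1$, the path $Y^N$ stays in a fixed compact $\mathcal K\subset\R_+^2\setminus\{(0,0)\}$ and a Taylor expansion gives $\sqrt N\bigl(F(Y^N(s))-F(\psi(s))\bigr)=DF(\psi(s))\,W^N(s)+\rho^N(s)$ with $\|\rho^N(s)\|\le C_{\mathcal K}N^{-1/2}\|W^N(s)\|^2$. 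Setting $U^N(t)=W^N(0)+\sqrt N\int_0^t(F^N-F)(Y^N(s))\d s+\int_0^t\rho^N(s)\d s+\tfrac1{\sqrt N}M^N(t)$, the process $W^N$ solves, by construction, the linear integral equation
\begin{equation*}
  W^N(t)=\int_0^t DF(\psi(s))\,W^N(s)\d s+U^N(t).
\end{equation*}

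The first step is then to prove that $N^{-1/2}M^N$ converges in distribution in $\D([0,T],\R^2)$ to a continuous centred Gaussian martingale $G$ with $\<< G\>>_t=\Gamma(t)$, the matrix displayed in the statement. I would invoke the martingale central limit theorem (see e.g.~\cite{MR982268}): its predictable bracket is $N^{-1}\<< M^N\>>_t$, and by the explicit formula of Theorem~\ref{thm:martingale problem}, scale invariance of the fractions, the moment bound~(\ref{eq:10}) and the uniform convergence $Y^N\to\psi$ of Theorem~\ref{thm:mean-field-appr} (fed into a dominated‑convergence argument), $N^{-1}\<< M^N\>>_t\to\Gamma(t)$ in probability for every $t\le T$ — for instance $N^{-1}q_1^N(X^N(s))\to r+\lambda p_I\psi_1(s)/(\psi_1(s)+\psi_2(s))$, and collecting the five rates reproduces exactly $\Gamma_1$, $\Gamma_2$ and the off‑diagonal entry. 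Since the jumps of $X^N$ have size $1$, $\esp{\sup_{t\le T}\|\Delta(N^{-1/2}M^N)(t)\|^2}\le N^{-1}\to0$, which is the required jump (Lindeberg) condition; the limiting bracket being deterministic and continuous, $G$ is Gaussian with the announced structure.

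Next I would show that the remainder is negligible. Running the Gronwall estimate behind Lemma~\ref{lem:control-ci} in $\L^2$ instead of $\L^1$ gives $\sup_N\esp{\sup_{s\le T}\|W^N(s)\|^2}<\infty$, using that $N^{-1}\esp{\sup_{t\le T}\|M^N(t)\|^2}=O(1)$ and that $\int_0^T(1+1/\|\psi(x^0,s)\|)^2\d s<\infty$ by Theorem~\ref{thm:asymptotique_syst_diff}, provided $\sqrt N(Y^N(0)-x^0)$ is bounded in $\L^2$. Then $\esp{\sup_{t\le T}\int_0^t\|\rho^N(s)\|\d s}\le C_{\mathcal K}TN^{-1/2}\sup_N\esp{\sup_{s\le T}\|W^N(s)\|^2}\to0$, and $\sqrt N\int_0^t(F^N-F)(Y^N(s))\d s$ is bounded by $T\sqrt N\bigl(|r_N/N-r|+|\lambda_N/N-\lambda|\bigr)$, which tends to $0$ once the hypothesis of Theorem~\ref{thm:mean-field-appr} is sharpened to $\sqrt N(r_N/N-r)\to0$, $\sqrt N(\lambda_N/N-\lambda)\to0$ and $\sqrt N(Y^N(0)-x^0)\to0$ in probability — the scaling needed for a non‑degenerate limit. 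Hence $U^N$ converges in distribution to $G$ in $\D([0,T],\R^2)$. Finally, the solution operator of the linear Volterra equation $W(t)=\int_0^t DF(\psi(s))W(s)\d s+U(t)$, with $DF(\psi(\cdot))$ continuous and bounded on $[0,T]$, is linear and continuous from $\D([0,T],\R^2)$ into itself and maps continuous paths to continuous paths (write $W(t)=U(t)+\int_0^t\Phi(t,s)U(s)\d s$ for a continuous resolvent kernel $\Phi$), so by the continuous mapping theorem $W^N$ converges in distribution to the unique solution $W$ of $W(t)=G(t)+\int_0^t DF(\psi(s))W(s)\d s$. Being a linear image of the Gaussian martingale $G$, $W$ is a centred Gaussian process, and its covariance is determined by the bracket $\<< G\>>=\Gamma$. (Alternatively: establish tightness of $W^N$ directly from the integral equation and the a priori $\L^2$ bound, then identify every limit point through the same equation, whose solution is unique.)

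\textbf{Main obstacle.} The delicate point is the uniform control of the $\sqrt N$‑amplified quadratic remainder $\rho^N$: it forces one to return to the estimate of Lemma~\ref{lem:control-ci} and extract a genuine $O(N^{-1/2})$ rate for $\esp{\sup_{s\le T}\|Y^N(s)-\psi(s)\|}$, equivalently a uniform $\L^2$ bound on $\sup_{s\le T}\|W^N(s)\|$. This is also where the $\sqrt N$‑scaling of the initial condition and of the arrival rates must be assumed. The near‑singularity of the $q_j$ at the origin is handled exactly as before, through $\inf_{s\le T}\|\psi(x^0,s)\|>0$ (Theorem~\ref{thm:asymptotique_syst_diff}), which is what makes both the linearisation and the control of $\|Y^N-\psi\|$ legitimate; the martingale convergence itself is comparatively routine given the closed form of $\<< M^N\>>$.
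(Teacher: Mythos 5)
Your route is the full Kurtz density-dependent-chain argument (drift linearisation, an a priori $\L^2$ bound on $W^N$, the martingale CLT for the noise, then the continuous mapping theorem through the linear Volterra equation), whereas the paper's proof is a two-line appeal to the martingale central limit theorem of Ethier and Kurtz applied directly to $W^N$: the jumps of $W^N$ are bounded by $N^{-1/2}$, and $\<< W^N\>>_t=N^{-1}\<< M^N\>>_t$ converges to $\Gamma(t)$ by Theorem \ref{thm:martingale problem} and Theorem \ref{thm:mean-field-appr}. The two arguments are therefore genuinely different, and yours is considerably more demanding: it needs the $\sqrt N$-rates on $r_N/N-r$, $\lambda_N/N-\lambda$ and on the initial condition, which are not part of the hypotheses of Theorem \ref{thm:mean-field-appr}, and it needs Lemma \ref{lem:control-ci} upgraded to a uniform $\L^2$ bound on $\sup_{s\le T}\|W^N(s)\|$ — you have correctly isolated that as the hard technical point.

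The substantive issue is that the two arguments do not land on the same limit, and yours does not prove the covariance displayed in the statement. Your decomposition makes explicit the term $\sqrt N\int_0^t(F(Y^N(s))-F(\psi(s)))\d s\approx\int_0^t DF(\psi(s))W^N(s)\d s$, which is of order one and survives in the limit; consequently your limit is the solution of $W(t)=G(t)+\int_0^t DF(\psi(s))W(s)\d s$, a centred Gaussian process whose covariance is $\int_0^t\Phi(t,s)\d\Gamma(s)\,\Phi(t,s)^{\mathsf T}$ for the resolvent $\Phi$ of $DF(\psi(\cdot))$ — not $\Gamma(t)$, since $DF(\psi(\cdot))$ does not vanish here. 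Saying at the end that the covariance of $W$ ``is determined by'' $\<< G\>>=\Gamma$ glosses over exactly this discrepancy. The paper, for its part, identifies the limiting covariance with $\lim_N N^{-1}\<< M^N\>>_t=\Gamma(t)$, i.e., it applies a theorem stated for martingales to the semimartingale $W^N$ and thereby discards the drift term that your computation shows is not negligible. So either that term must be shown to be asymptotically irrelevant (it is not), or $\Gamma$ must be read as the bracket of the driving Gaussian martingale $G$ rather than as the covariance of the limit of $W^N$. You should state explicitly which process your $\Gamma$ is the covariance of, and reconcile your conclusion with the claim as written.
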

\begin{proof}
  According to \cite[p. 339]{ethier86}, it suffices to prove that
  \begin{equation*}
    \esp{\sup_{t\le T}|W^N(t)-W^N(t_-)|}\xrightarrow{N\to +\infty} 0
  \end{equation*}
  and that
  \begin{equation*}
    \<< W^N\>>_t \xrightarrow{N\to +\infty} \Gamma(t).
  \end{equation*}
  Since the jumps of $Y^N$ are bounded by $1/N$, those of $W^N$ are
  bounded by $N^{-1/2}$, hence the first point is proved. As to the
  second point, remark that
  \begin{equation*}
    \<< W^N\>>_t = N^{-1} \<< M^N\>>_t
  \end{equation*}
  and then use Theorem \ref{thm:mean-field-appr}.
\end{proof}

\section{Numerical investigation}
\label{sec:numer-invest}
Another approach to evaluate the order of approximation can be made by
computer simulation. We simulated the Markov process for $N=100$ and
compute the estimate of the prevalence by a simple Monte-Carlo
method on $10,000$ trajectories. For  the parameters we chose, $\alpha =1,\, \mu_1=0.1,\, \mu_2=0.2,\, r=1,\, \lambda=5$ and
$p_I=0.8,$ the results are strikingly good as
shown in Figure \ref{fig:prevalence}. Note
that the choice of parameters is here very delicate since biological
parameters are not very well known (i.e., $\mu_1$, $\mu_2$, $p$,
$\ldots$) and population dependant quantities are even more obscure to
determine. We here chose parameters which seems reasonable and fit the
observed prevalence. 

\begin{figure}[!ht]
  \centering
  \includegraphics*[width=0.8\textwidth]{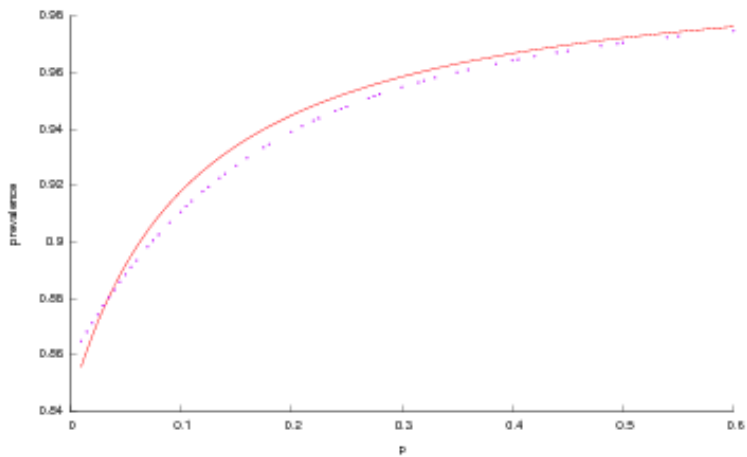}
  \caption{Prevalence with respect to $p$. The solid line represents
    the value as computed by Equations \ref{eq:2}. The dots represents
    the simulated values. The 95$\%$ confidence interval are so small, they can't be displayed.}
  \label{fig:prevalence}
\end{figure}

In such models, another quantity of interest is the relative
importance of each parameters: what does affect  most the
prevalence~? On the deterministic system, this question is easily
solved by computing the derivative of the prevalence with respect to
each of the parameters. We now explain how to compute the sensitivity
of the prevalence on the stochastic model. Say we have a function $F$
bounded which depends on the sample-paths of $X$, we aim to compute:
\begin{equation*}
  \frac{d}{dp}{\mathbf E}_p[F],
\end{equation*}
where we put a $p$ under the expectation symbol to emphasize the
dependence of the underlying probability with respect to $p$. Other
``greeks'', as these quantities are called in mathematical finance,
can be derived analogously. We
assume that we observe the Markov process on a time window of size
$T$, i.e., any functional is implicitly assumed to belong to
$\F_T=\sigma\{X(s),\, 0\le s\le T\}.$
\begin{theorem}
  For any bounded $F$, $F\in \F_T$, we have:
  \begin{align*}
 \frac{d}{dp}{\mathbf E}_p[F]&=\frac 1p{\mathbf E}_p\left[ F \left( \sum_{s\le
       T}\car_{\{(1,\, -1)\}}(\Delta X(s))-\int_0^T q_3(X(s_-))\d
     s\right)\right]\\
&=\frac 1p\text{cov}_p\left( F,\    \sum_{s\le
       T}\car_{\{(1,\, -1)\}}(\Delta X(s))\right),
  \end{align*}
where $\Delta X(s)=X(s)-X(s_-)$.
 \end{theorem}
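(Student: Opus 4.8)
The plan is to derive this as a Girsanov-type (Radon--Nikodym) formula for pure jump Markov processes, differentiated at the parameter $p$. First I would fix $T>0$ and, for $p$ in a neighbourhood of the value of interest, construct the law $\P_p$ of the process $X$ on the canonical path space $\D([0,T],\N\times\N)$ via its transition rates $q_1,\dots,q_5$. The only rate depending on $p$ is $q_3(n_1,n_2)=\alpha p\, n_2 n_1/(n_1+n_2)$, which is linear in $p$; all other rates, and the total jump rate governing the holding times, are unaffected except through $q_3$. The key object is the likelihood ratio $L_T^{p,p_0}=\d\P_p/\d\P_{p_0}\big|_{\F_T}$. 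For a counting-process description, writing $N_3(t)=\sum_{s\le t}\car_{\{(1,-1)\}}(\Delta X(s))$ for the number of ``infection by contact'' jumps up to time $t$, the classical formula for the density between two jump processes differing only in the rate of the third transition is
\begin{equation*}
  L_T^{p,p_0}=\left(\frac{p}{p_0}\right)^{N_3(T)}\exp\left(-\int_0^T (q_3^{p}-q_3^{p_0})(X(s_-))\d s\right)
  =\left(\frac{p}{p_0}\right)^{N_3(T)}\exp\left(-\frac{p-p_0}{p_0}\int_0^T q_3^{p_0}(X(s_-))\d s\right),
\end{equation*}
where I used $q_3^{p}=(p/p_0)\,q_3^{p_0}$.

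Given this, the proof is a differentiation under the expectation sign. Writing $\esp{F}_p=\esp{F\,L_T^{p,p_0}}_{p_0}$ for any bounded $\F_T$-measurable $F$, I would differentiate the right-hand side in $p$ and evaluate at $p=p_0$. Since $\partial_p \log L_T^{p,p_0}\big|_{p=p_0}=\dfrac{1}{p_0}N_3(T)-\dfrac{1}{p_0}\int_0^T q_3^{p_0}(X(s_-))\d s$, one obtains formally
\begin{equation*}
  \frac{d}{dp}\esp{F}_p\Big|_{p=p_0}=\frac{1}{p_0}\,\esp{F\left(N_3(T)-\int_0^T q_3(X(s_-))\d s\right)}_{p_0},
\end{equation*}
which is exactly the first displayed identity (dropping the subscript $p_0$). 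The second identity then follows immediately: by Theorem \ref{thm:martingale problem} applied to the coordinate counting this transition, $N_3(t)-\int_0^t q_3(X(s_-))\d s$ is a martingale vanishing at $0$ (it is the compensated jump measure of the $(1,-1)$-jumps), so its expectation is $0$; hence $\esp{F\int_0^T q_3(X(s_-))\d s}=\esp{F\,N_3(T)}-\text{cov}(F,N_3(T))$ can be rearranged, and more directly $\esp{F(N_3(T)-\int_0^T q_3(X(s_-))\d s)}=\esp{F\,N_3(T)}-\esp{F}\esp{N_3(T)}=\text{cov}(F,N_3(T))$, again using that the compensator term has the same expectation under $F\equiv 1$.

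The main obstacle is the rigorous justification of the two analytic steps that I glossed over. First, one must verify that $L_T^{p,p_0}$ is genuinely the Radon--Nikodym derivative: this is the Jacod--Girsanov change-of-measure theorem for marked point processes, and one needs the rates to be bounded along trajectories on $[0,T]$ — which is supplied by Lemma \ref{lem:borne_sup}, since $\sup_{t\le T}\|X(t)\|$ has moments of all orders, making $\int_0^T q_3(X(s_-))\d s$ integrable and $L_T^{p,p_0}$ a true (mean-one) martingale in $p_0$-expectation. Second, differentiation under the expectation requires a domination: one needs $\sup_{p\in(p_0-\delta,p_0+\delta)}\big|\partial_p L_T^{p,p_0}\big|$ to be $\P_{p_0}$-integrable, which again reduces to moment bounds on $N_3(T)$ and on $\int_0^T q_3(X(s_-))\d s$, both controlled by Lemma \ref{lem:borne_sup} together with the elementary bound $N_3(T)\le \|X(T)\|-\|X(0)\|+(\text{number of departures})$, itself dominated via the same suppression-of-departures argument. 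Once these integrability estimates are in place, the computation is the routine differentiation carried out above, and the covariance form is a one-line consequence of the martingale property of the compensated counting process.
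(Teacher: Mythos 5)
Your main argument coincides with the paper's: the paper encodes the dynamics as a marked point process on $\R^+\times\{1,\dots,5\}$ with $\P_p$-compensator $\d\nu^p(t,i)=q_i(X(t_-))\d t$, notes that only the third mark's compensator is rescaled (by $1+\e/p$) when $p$ is perturbed, applies the Girsanov theorem to get ${\mathbf E}_{p+\e}[F]={\mathbf E}_p\bigl[F\,{\mathcal E}\bigl(\tfrac{\e}{p}(\mu([0,T]\times\{3\})-\nu^p([0,T]\times\{3\}))\bigr)\bigr]$, and differentiates the Dol\'eans--Dade exponential at $\e=0$. Your explicit likelihood ratio $(p/p_0)^{N_3(T)}\exp\bigl(-\int_0^T(q_3^{p}-q_3^{p_0})(X(s_-))\d s\bigr)$ is exactly that Dol\'eans--Dade exponential written out, and your logarithmic differentiation at $p=p_0$ gives the paper's first identity; your discussion of the integrability needed to justify the change of measure and the differentiation under the expectation (via Lemma \ref{lem:borne_sup}) is a useful supplement to the paper, which is silent on these points.

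The genuine flaw is in your derivation of the covariance form. Since the weight $W=N_3(T)-\int_0^T q_3(X(s_-))\d s$ is the value at $T$ of a mean-zero martingale, you may legitimately conclude ${\mathbf E}_p[FW]=\text{cov}_p(F,W)$, i.e.\ covariance of $F$ with the \emph{compensated} count. But to pass from there to $\text{cov}_p(F,N_3(T))$ you write ${\mathbf E}_p\bigl[F\int_0^T q_3(X(s_-))\d s\bigr]={\mathbf E}_p[F]\,{\mathbf E}_p[N_3(T)]$, ``using that the compensator term has the same expectation under $F\equiv1$''. That observation only yields ${\mathbf E}_p[\int_0^T q_3\,\d s]={\mathbf E}_p[N_3(T)]$; it does not let you factor $F$ out of the expectation, and $\text{cov}_p\bigl(F,\int_0^T q_3(X(s_-))\d s\bigr)$ has no reason to vanish for a general bounded $F\in\F_T$ (take $F$ a bounded nondecreasing function of $\int_0^T q_3(X(s))\d s$ itself). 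So your chain of equalities actually establishes $\frac{d}{dp}{\mathbf E}_p[F]=\frac1p\text{cov}_p\bigl(F,\,N_3(T)-\int_0^T q_3(X(s_-))\d s\bigr)$ and not the second displayed line as stated. To be fair, the paper gives no justification of that line either --- its proof produces the first identity and concludes with ``hence the result'' --- so the difficulty is inherited from the statement; but as written, your argument for this step is a non sequitur and should either be repaired (by restating the covariance with the compensated count) or flagged as such.
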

 \begin{proof}
   The proof relies on the Girsanov theorem which is more easily
   expressed in the framework of multivariate point measures.
Since there are only five kind of jumps, we can represent the dynamics
of $X$ as a point measures on $\R^+\times \{1,\cdots,\, 5\}$:
\begin{equation*}
  \mu([0,\, t]\times \{i\})=\sum_{s\le t} \car_{\{\Delta X(s)=l_i\}},
\end{equation*}
where
\begin{equation*}
  l_1=(1,\, 0),\ l_2=(-1,\, 0),\, l_3=(1,\, -1),\, l_4=(0,\,1)
  \text{ and } l_5=(0,\, -1). 
\end{equation*}
In the reverse direction, 
\begin{equation*}
  X(t)=X(0)+\sum_{i=1}^5 \mu([0,\, t]\times \{i\}) \, l_i.
\end{equation*}
It is immediate from the preceding results that $\nu^p$,  the $\P_p$-predictable
compensator of $\mu$ is given by
\begin{equation*}
  \d\nu^p(t,\, i)=q_i(X(t_-))\d t.
\end{equation*}
To compute $ {d}/{dp}{\mathbf E}_p[F]$ means to compute 
\begin{equation*}
  \lim_{\ee \to 0} \frac 1\epsilon \left({\mathbf E}_{p+\ee}[F]-{\mathbf E}_p[F]\right).
\end{equation*}
Under $\P_{p+\ee}$, 
\begin{equation*}
   \d\nu^{p+\ee}(t,\, i)=\d\nu^p(t,\, i) \text{ for } i\neq 3 \text{ and }
   \d\nu^{p+\ee}(t,\, 3)=(1+\frac \ee p)  \d\nu^p(t,\, 3).
\end{equation*}
Let 
\begin{equation*}
  U(t,\, i)=
  \begin{cases}
    0 & \text{ if } i\neq 3,\\
\dfrac \ee p & \text{ if } i=3.
  \end{cases}
\end{equation*}
According to the Girsanov theorem (see
\cite{decreusefond95_3,jacod79}), this means that 
\begin{align*}
  {\mathbf E}_{p+\ee}[F]&={\mathbf E}_{p}\left[F\, {\mathcal E}(\int_0^t
    U(s,\, i)(\d\mu(s,\, i)-\d\nu^p(s, \, i))) \right]\\
&={\mathbf E}_{p}\left[F\, {\mathcal E}\left(\frac\ee p (\mu([0,\, T]\times
  \{3\})-\nu^p([0,\, T]\times\{3\}))\right),
    \right]
\end{align*}
where ${\mathcal E}$ denotes the Dol\'eans-Dade exponential. It is known
(see \cite{decreusefond95_3}) that a Doléans-Dade exponential follows
 the same rule of derivation as a usual exponential, hence
the result.
\end{proof}
With the parameters above, the simulated greek coincides pretty well
with the sensitivity computed by differentiating the expression of the
stationary prevalence in the deterministic system, see Figure \ref{fig:sensitivity}. However, as usual
with this method, the confidence interval are rather large.

\begin{figure}[!ht]
  \centering
  \includegraphics*[width=0.8\textwidth]{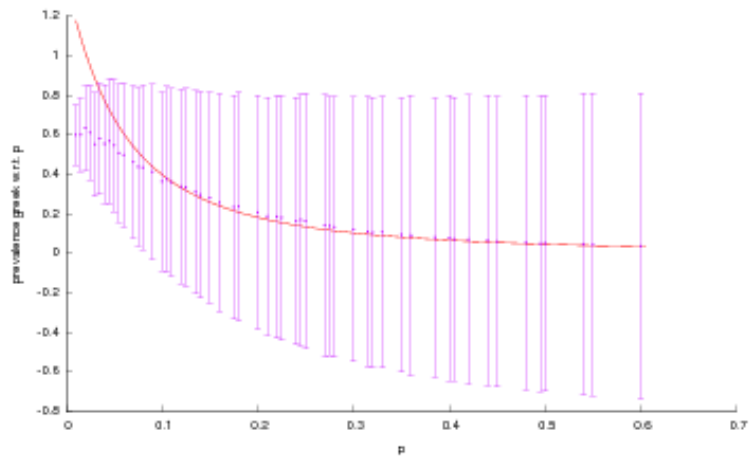}
  \caption{Prevalence greek with respect to $p$. Same conventions as
    above. The error bars represent the $95\%$ confidence interval.}
  \label{fig:sensitivity}
\end{figure}


\providecommand{\bysame}{\leavevmode\hbox to3em{\hrulefill}\thinspace}
\providecommand{\MR}{\relax\ifhmode\unskip\space\fi MR }
\providecommand{\MRhref}[2]{%
  \href{http://www.ams.org/mathscinet-getitem?mr=#1}{#2}
}
\providecommand{\href}[2]{#2}


\end{document}